\numberwithin{equation}{section}
\numberwithin{figure}{section}
\numberwithin{table}{section}
\long\def\MSC#1\EndMSC{\def\arg{#1}\ifx\arg\empty\relax\else
	{\narrower\noindent%
		{2020 Mathematics Subject Classification}: #1\\} \fi}
\long\def\PACS#1\EndPACS{\def\arg{#1}\ifx\arg\empty\relax\else
	{\narrower\noindent%
		{PACS numbers}: #1}\fi}
\long\def\KEY#1\EndKEY{\def\arg{#1}\ifx\arg\empty\relax\else
	{\narrower\noindent%
		Keywords: #1\\}\fi}
\theoremstyle{plain}
\newtheorem{theorem}{Theorem}[section]
\newtheorem{lemma}[theorem]{Lemma}
\newtheorem{proposition}[theorem]{Proposition}
\newtheorem{corollary}[theorem]{Corollary}
\theoremstyle{definition}
\theoremstyle{remark}
\newtheorem{remark}[theorem]{Remark}
\newcommand{\norm}[1]{\lVert#1\rVert}
\newcommand{\abs}[1]{\lvert#1\rvert} 
\newcommand{\inner}[1]{\langle#1\rangle} 
\newcommand{\mspan}{\mathop{\textup{span}}}
\newcommand{\essinf}{\mathop{\textup{ess\,inf}}}
\newcommand{\sgn}{\mathop{\textup{sgn}}}
\newcommand{\I}{\mathrm{i}}    
\newcommand{\e}{\mathrm{e}}    
\newcommand{\di}{\mathrm{d}}   
\newcommand{\R}{\mathbb{R}}
\newcommand{\N}{\mathbb{N}}
\newcommand{\C}{\mathbb{C}}
\newcommand{\Z}{\mathbb{Z}}
\newcommand{\redel}{\mathop{\textup{Re}}}
\begin{document}
	\title[Lipschitz stability for infinite-dimensional spaces of perturbations]{Linearised Calder\'on problem: Reconstruction and Lipschitz stability for infinite-dimensional spaces of unbounded perturbations}
	
	\author[H.~Garde]{Henrik Garde}
	\address[H.~Garde]{Department of Mathematics, Aarhus University, Ny Munkegade 118, 8000 Aarhus C, Denmark.}
	\email{garde@math.au.dk}
	
	\author[N.~Hyv\"onen]{Nuutti Hyv\"onen}
	\address[N.~Hyv\"onen]{Department of Mathematics and Systems Analysis, Aalto University, P.O. Box~11100, 00076 Helsinki, Finland.}
	\email{nuutti.hyvonen@aalto.fi}
	
	\begin{abstract}
		We investigate a linearised Calder\'on problem in a two-dimensional bounded simply connected $C^{1,\alpha}$ domain $\Omega$. After extending the linearised problem for $L^2(\Omega)$ perturbations, we orthogonally decompose $L^2(\Omega) = \oplus_{k=0}^\infty \mathcal{H}_k$ and prove Lipschitz stability on each of the infinite-dimensional $\mathcal{H}_k$ subspaces. In particular, $\mathcal{H}_0$ is the space of square-integrable harmonic perturbations. This appears to be the first Lipschitz stability result for infinite-dimensional spaces of perturbations in the context of the (linearised) Calder\'on problem. Previous optimal estimates with respect to the operator norm of the data map have been of the logarithmic-type in infinite-dimensional settings. The remarkable improvement is enabled by using the Hilbert--Schmidt norm for the Neumann-to-Dirichlet boundary map and its Fr\'echet derivative with respect to the conductivity coefficient. We also derive a direct reconstruction method that inductively yields the orthogonal projections of a general $L^2(\Omega)$ perturbation onto the $\mathcal{H}_k$ spaces, hence reconstructing any $L^2(\Omega)$ perturbation.
	\end{abstract}	
	\maketitle
	
	\KEY
	Calder\'on problem, 
	linearisation,
	Lipschitz stability,
	reconstruction.
	\EndKEY
	
	\MSC
	35R30, 35R25.
	\EndMSC
	
	\section{Introduction} \label{sec:intro}
	
	\subsection{Linearised Calder\'on problem} 
	
	Let $\Omega$ be a bounded simply connected $C^{1,\alpha}$ domain in $\mathbb{R}^2$ for some $\alpha\in (0,1)$. For an isotropic conductivity coefficient $\gamma\in L^\infty(\Omega;\mathbb{R})$, with $\essinf\gamma > 0$, and a Neumann boundary value (current density)
	\begin{equation*}
		f\in L^2_\diamond(\partial\Omega) = \{ g\in L^2(\partial\Omega) \mid \inner{g,1}_{L^2(\partial\Omega)} = 0 \},
	\end{equation*}
	the conductivity equation in $\Omega$ reads
	\begin{equation}
          \label{eq:cond_eq}
		-\nabla\cdot(\gamma\nabla u) = 0 \text{ in } \Omega, \qquad \nu\cdot(\gamma\nabla u) = f  \text{ on } \partial\Omega,
	\end{equation}
	where $\nu$ is the exterior unit normal of $\partial \Omega$. The Lax-Milgram lemma yields a unique weak solution 
	\begin{equation*}
		u_f^\gamma \in H_\diamond^1(\Omega) = \big\{ w\in H^1(\Omega) \mid \inner{w|_{\partial\Omega},1}_{L^2(\partial\Omega)}=0 \big\}
	\end{equation*}
    for \eqref{eq:cond_eq}, corresponding to the interior electric potential. The  Neumann-to-Dirichlet (ND) map $\Lambda(\gamma)f = u_f^\gamma|_{\partial\Omega}$ is a compact self-adjoint operator in $\mathscr{L}(L^2_\diamond(\partial\Omega))$, associating applied current densities with boundary voltage measurements. In two dimensions and under the assumed boundary regularity, the ND map actually belongs to $\mathscr{L}_{\textup{HS}}(L^2_\diamond(\partial\Omega))$, the space of Hilbert--Schmidt operators on $L^2_\diamond(\partial\Omega)$~\cite[Theorem~A.2]{Garde2022a}. The forward map $\gamma\mapsto\Lambda(\gamma)$ is Fr\'echet differentiable at $\gamma$ with respect to complex-valued $L^\infty(\Omega)$ perturbations, with the corresponding derivative  $D\!\Lambda(\gamma)$ in  $\mathscr{L}(L^\infty(\Omega),\mathscr{L}_{\textup{HS}}(L_\diamond^2(\partial\Omega))) \subset \mathscr{L}(L^\infty(\Omega),\mathscr{L}(L_\diamond^2(\partial\Omega)))$. 
	
	We investigate the Fr\'echet derivative at the unit conductivity,~i.e.~$F = D\!\Lambda(1)$. Let $u_f$ and $u_g$ be harmonic functions in $\Omega$ with $f$ and $g$, respectively, as their Neumann traces. Then $F$ satisfies
	\begin{equation} \label{eq:F}
		\inner{(F\eta)f,g}_{L^2(\partial \Omega)} = -\int_{\Omega} \eta\nabla u_f\cdot\overline{\nabla u_g}\,\di x, \qquad \eta\in L^\infty(\Omega),\quad  f,g\in L^2_\diamond(\partial\Omega),
	\end{equation}
    which we adopt as the definition of $F$ in the following. In our two-dimensional setting, one may actually consider more general (unbounded) $L^2(\Omega)$ perturbations.
	\begin{proposition} \label{prop:L2}
		$F$ extends to an operator in $\mathscr{L}(L^2(\Omega),\mathscr{L}(L_\diamond^2(\partial\Omega)))$ via \eqref{eq:F}.
	\end{proposition}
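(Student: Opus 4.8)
The plan is to reduce the claimed boundedness of $F\colon L^2(\Omega)\to\mathscr{L}(L^2_\diamond(\partial\Omega))$ to a single interior regularity estimate for harmonic functions, namely
\[
\norm{\nabla u_f}_{L^4(\Omega)} \le C\,\norm{f}_{L^2(\partial\Omega)}, \qquad f\in L^2_\diamond(\partial\Omega),
\]
with $C$ depending only on $\Omega$. Indeed, granting this, Hölder's inequality with exponents $(2,4,4)$ applied to the right-hand side of \eqref{eq:F} gives
\[
\Big| \int_{\Omega} \eta\,\nabla u_f\cdot\overline{\nabla u_g}\,\di x \Big| \le \norm{\eta}_{L^2(\Omega)}\,\norm{\nabla u_f}_{L^4(\Omega)}\,\norm{\nabla u_g}_{L^4(\Omega)} \le C^2\,\norm{\eta}_{L^2(\Omega)}\,\norm{f}_{L^2(\partial\Omega)}\,\norm{g}_{L^2(\partial\Omega)}.
\]
Thus, for each $\eta\in L^2(\Omega)$, the right-hand side of \eqref{eq:F} is a bounded sesquilinear form on $L^2_\diamond(\partial\Omega)\times L^2_\diamond(\partial\Omega)$, which by the Riesz representation theorem defines a unique $F\eta\in\mathscr{L}(L^2_\diamond(\partial\Omega))$ with $\norm{F\eta}\le C^2\norm{\eta}_{L^2(\Omega)}$; since the form is linear in $\eta$, the map $\eta\mapsto F\eta$ is bounded and linear, and it agrees with \eqref{eq:F} on $L^\infty(\Omega)$. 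It suffices to establish the displayed estimate for $f$ in the dense subspace of smooth mean-free densities and to pass to the limit.

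To prove the $L^4$ gradient bound I would exploit the two-dimensional complex structure. Writing $\Phi_f = \partial_1 u_f - \I\,\partial_2 u_f$, harmonicity of $u_f$ makes $\Phi_f$ holomorphic with $\abs{\Phi_f}=\abs{\nabla u_f}$; in fact $\Phi_f = w_f'$, where $w_f = u_f + \I v_f$ and $v_f$ is a harmonic conjugate of $u_f$. The Cauchy--Riemann relations give $\partial_\tau v_f = \partial_\nu u_f = f$ on $\partial\Omega$, so (using $f\in L^2_\diamond(\partial\Omega)$ to ensure $v_f|_{\partial\Omega}$ is single valued) the boundary trace of $v_f$ lies in $H^1(\partial\Omega)$ with $\norm{v_f|_{\partial\Omega}}_{H^1(\partial\Omega)}\le C\norm{f}_{L^2(\partial\Omega)}$ after normalising the free additive constant.

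I would then transport the problem to the unit disk via a Riemann map $\psi\colon\mathbb{D}\to\Omega$. Because $\partial\Omega$ is $C^{1,\alpha}$, the Kellogg--Warschawski theorem guarantees $\psi\in C^{1,\alpha}(\overline{\mathbb{D}})$ with $0<c\le\abs{\psi'}\le C$ on $\overline{\mathbb{D}}$; hence the pullback $\tilde w = w_f\circ\psi$ is holomorphic, $\tilde w' = (\Phi_f\circ\psi)\,\psi'$, and the uniform bounds on $\abs{\psi'}$ turn the area change of variables $\di A_\Omega = \abs{\psi'}^2\,\di A_{\mathbb{D}}$ into the comparison $\norm{\nabla u_f}_{L^4(\Omega)}\le c^{-1/2}\norm{\tilde w'}_{L^4(\mathbb{D})}$. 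Moreover, since $\psi$ restricts to a $C^{1,\alpha}$ boundary diffeomorphism, composition with $\psi$ is bounded on $H^1$ of the boundary, so $\imdel\tilde w|_{\partial\mathbb{D}}\in H^1(\partial\mathbb{D})$ with norm $\lesssim\norm{f}_{L^2(\partial\Omega)}$. Comparing Fourier coefficients then yields $\tilde w'\in H^2(\mathbb{D})$, the Hardy space of the disk, with $\norm{\tilde w'}_{H^2(\mathbb{D})}= \norm{\imdel\tilde w|_{\partial\mathbb{D}}}_{\dot H^1(\partial\mathbb{D})}\lesssim\norm{f}_{L^2(\partial\Omega)}$.

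The remaining, and genuinely analytic, step is the Hardy--Bergman embedding $H^2(\mathbb{D})\hookrightarrow A^4(\mathbb{D})$, i.e.\ $\norm{h}_{L^4(\mathbb{D})}\le C\norm{h}_{H^2(\mathbb{D})}$ for holomorphic $h$; applied to $h=\tilde w'$ this closes the chain
\[
\norm{\nabla u_f}_{L^4(\Omega)}\lesssim\norm{\tilde w'}_{L^4(\mathbb{D})}\lesssim\norm{\tilde w'}_{H^2(\mathbb{D})}\lesssim\norm{f}_{L^2(\partial\Omega)}.
\]
I regard this embedding as the main obstacle. It is classical --- equivalently, writing $g=h^2\in H^1(\mathbb{D})$ reduces it to $\norm{g}_{A^2(\mathbb{D})}\le C\norm{g}_{H^1(\mathbb{D})}$ --- and can be obtained from the Hardy--Littlewood theory of $H^p$ spaces or from the fact that planar area measure is a Carleson measure for the disk; the only care needed is to track that the constant depends on $\Omega$ solely through the Riemann map, so that the final bound is uniform in $f$. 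Everything else (Hölder, Riesz representation, the conformal change of variables, and the density argument) is routine.
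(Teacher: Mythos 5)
Your argument is correct, and it funnels into the same pivotal estimate as the paper's proof, namely $\norm{\nabla u_f}_{L^4}\le C\norm{f}_{L^2(\partial\Omega)}$ followed by H\"older with exponents $(2,4,4)$ and a Riesz-representation/density step; but you establish that estimate by a genuinely different route. The paper first transplants the problem to the unit disk via the Riemann map (as you also do) and then simply cites the elliptic regularity result $\norm{u_f}_{H^{3/2}(D)/\C}\le C\norm{f}_{L^2(\partial D)}$ of Lions--Magenes together with the two-dimensional Sobolev embedding $H^{1/2}(D)\hookrightarrow L^4(D)$ applied to $\nabla u_f$. You instead pass to the holomorphic derivative $w_f'$, control the harmonic conjugate's boundary trace in $H^1(\partial\Omega)$ via $\partial_\tau v_f=f$ (using the mean-zero condition for single-valuedness), read off from Fourier coefficients that $\tilde w'$ lies in the Hardy space $H^2$ of the disk with norm $\lesssim\norm{f}_{L^2(\partial\Omega)}$, and close with the Hardy--Littlewood embedding $H^2\subset A^4$. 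The two mechanisms are close cousins: for harmonic functions on the disk the $H^{3/2}(D)/\C$ norm is comparable to the $H^2$ norm of the derivative of the associated holomorphic function, and the embedding $H^2\subset A^4$ is precisely the holomorphic incarnation of $H^{1/2}(D)\hookrightarrow L^4(D)$, so neither version buys extra generality. The paper's proof is shorter given the references it invokes, whereas yours is more self-contained and makes the inherently two-dimensional, complex-analytic origin of the estimate explicit. Two minor points worth tightening: the identity $\norm{\tilde w'}_{H^2(\mathbb{D})}=\norm{\imdel\tilde w}_{\dot H^1(\partial\mathbb{D})}$ holds only up to a fixed normalising constant, and in the final density passage you should note that $\nabla u_{f_n}$ is Cauchy in $L^4(\Omega)$ while converging in $L^2(\Omega)$ to $\nabla u_f$ by standard well-posedness, which identifies the $L^4$ limit; both are routine.
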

	The linearised Calder\'on problem consists of reconstructing a perturbation $\eta$ from the linearised data $F\eta$, in contrast to the nonlinear Calder\'on problem which seeks to reconstruct $\gamma$ from $\Lambda(\gamma)$. Calder\'on~\cite{Calderon1980} proved that the linearised forward map associated to the Dirichlet-to-Neumann (DN) map is injective for $L^\infty(\Omega)$ perturbations. Dos~Santos~Ferreira--Kenig--Sj\"ostand--Uhlmann~\cite{Ferreira2009} proved the corresponding result for partial data; see also~\cite{Sjostrand2016,Ferreira2020,Sharafutdinov2009} for related works. In fact, Calder\'on's original injectivity proof carries over to the case of general $L^2(\Omega)$ perturbations and the ND map, demonstrating that $F\eta = 0$ implies the Fourier transform of (the zero-extension of) $\eta$ vanishes. In section~\ref{sec:injectivity} we provide an alternative elementary proof for the injectivity of $F$ on $L^2(\Omega)$ based on polynomial approximation.

    Before presenting our two main results in the following subsection, let us showcase the main theorem on stability (Theorem~\ref{thm:main2}) by a conceptually simple special case: The Hilbert--Schmidt structure that $F$ exhibits on $L^\infty(\Omega)$ extends to harmonic perturbations in the closed subspace
	\begin{equation*}
		\mathcal{H}_0(\Omega) = \{ \eta\in L^2(\Omega) \mid \Delta\eta=0 \} \nsubseteq L^\infty(\Omega),
	\end{equation*}
	on which there is Lipschitz stability. 
	
	\begin{corollary} \label{coro:main2}
		$F\in\mathscr{L}(\mathcal{H}_0(\Omega),\mathscr{L}_\textup{HS}(L^2_\diamond(\partial\Omega)))$ and there exists $C>0$ such that
		\begin{equation*}
			\norm{\eta}_{L^2(\Omega)} \leq C\norm{F\eta}_{\mathscr{L}_{\textup{HS}}(L_\diamond^2(\partial\Omega))}
		\end{equation*}
        for all $\eta\in \mathcal{H}_0(\Omega)$.
	\end{corollary}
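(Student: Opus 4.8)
The plan is to establish both assertions of the corollary at once --- membership $F\in\mathscr{L}(\mathcal{H}_0(\Omega),\mathscr{L}_{\textup{HS}}(L^2_\diamond(\partial\Omega)))$ (the upper bound) and the Lipschitz lower bound --- by computing $\norm{F\eta}_{\mathscr{L}_{\textup{HS}}(L_\diamond^2(\partial\Omega))}$ essentially explicitly, first on the unit disc and then transporting the estimate to $\Omega$ by a conformal change of variables. The starting point is the Wirtinger form of the integrand in \eqref{eq:F}: with $\partial=\tfrac12(\partial_x-\I\partial_y)$ and $\bar\partial=\tfrac12(\partial_x+\I\partial_y)$ one has
\[
\nabla u_f\cdot\overline{\nabla u_g}=2\bigl(\partial u_f\,\overline{\partial u_g}+\bar\partial u_f\,\overline{\bar\partial u_g}\bigr),
\]
so that $\inner{(F\eta)f,g}_{L^2(\partial\Omega)}=-2\int_\Omega\eta\,(\partial u_f\,\overline{\partial u_g}+\bar\partial u_f\,\overline{\bar\partial u_g})\,\di x$.

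On $\Omega=\mathbb{D}$ I would use the orthonormal Fourier basis $\phi_n=(2\pi)^{-1/2}\e^{\I n\theta}$, $n\in\Z\setminus\{0\}$, of $L^2_\diamond(\partial\mathbb{D})$, whose harmonic extensions are the pure monomials $u_n=(2\pi)^{-1/2}n^{-1}z^n$ for $n>0$ and $u_n=(2\pi)^{-1/2}\abs{n}^{-1}\bar z^{\abs{n}}$ for $n<0$; these are holomorphic, respectively anti-holomorphic, so in the bilinear form exactly one of the two Wirtinger terms survives, and the mixed blocks (one positive and one negative index) vanish. Expanding a harmonic $\eta\in\mathcal{H}_0(\mathbb{D})$ in the orthogonal basis $\{z^j\}_{j\ge0}\cup\{\bar z^k\}_{k\ge1}$ (with $\norm{z^j}_{L^2(\mathbb{D})}^2=\norm{\bar z^j}_{L^2(\mathbb{D})}^2=\pi/(j+1)$) as $\eta=\sum_{j\ge0}a_j z^j+\sum_{k\ge1}b_k\bar z^k$, the matrix of $F\eta$ is block diagonal in the sign of $n$; within each block the coefficients $a_j,b_k$ sit on the diagonal shifted by their index, weighted by $-1/\max(\ell,m)$. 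Summing the squared entries gives the clean identity
\[
\norm{F\eta}_{\mathscr{L}_{\textup{HS}}(L_\diamond^2(\partial\mathbb{D}))}^2=2\sum_{j\ge0}\abs{a_j}^2 S_j+2\sum_{k\ge1}\abs{b_k}^2 S_k,\qquad S_j=\sum_{\ell>j}\ell^{-2}.
\]

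The crux --- and the reason Lipschitz stability holds here although only logarithmic estimates are available in operator norm --- is that the spectral weights match: the elementary comparison $\tfrac1{j+1}\le S_j\le\tfrac2{j+1}$ shows $S_j$ is two-sided comparable to the $L^2$ weight $\pi/(j+1)$, whence $\tfrac2\pi\norm{\eta}_{L^2(\mathbb{D})}^2\le\norm{F\eta}_{\mathscr{L}_{\textup{HS}}(L_\diamond^2(\partial\mathbb{D}))}^2\le\tfrac4\pi\norm{\eta}_{L^2(\mathbb{D})}^2$. This proves the corollary on the disc with $C=\sqrt{\pi/2}$, and the upper bound in particular yields $F\in\mathscr{L}(\mathcal{H}_0(\mathbb{D}),\mathscr{L}_{\textup{HS}})$.

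For a general simply connected $C^{1,\alpha}$ domain I would invoke the Riemann map $\Psi\colon\mathbb{D}\to\Omega$ with inverse $\Phi$. The Wirtinger integrand is conformally invariant: since $\partial(u\circ\Psi)=(\partial u\circ\Psi)\,\Psi'$, the two factors $\abs{\Psi'}^{-1}$ produced by differentiation cancel the area Jacobian $\abs{\Psi'}^2$, so the volume integral defining $F_\Omega\eta$ pulls back to the disc integral with $\tilde\eta=\eta\circ\Psi$, still harmonic. The main obstacle lies at the boundary: the Neumann data transform with the conformal factor, $\tilde f=\abs{\Psi'}(f\circ\Psi)$, so the induced map on boundary data is \emph{not} unitary and the Hilbert--Schmidt norm is not preserved verbatim; moreover the mean-zero constraint is not respected by this weighted pullback. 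I would resolve this via the Kellogg--Warschawski theorem, which for a $C^{1,\alpha}$ boundary guarantees that $\Psi'$ extends continuously to $\overline{\mathbb{D}}$ and is bounded above and bounded away from zero. Consequently both the boundary weight $\abs{\Psi'}$ and the area weight $\abs{\Phi'}$ relating $\norm{\tilde\eta}_{L^2(\mathbb{D})}$ to $\norm{\eta}_{L^2(\Omega)}$ are two-sided comparable, the deviation from unitarity and the $\diamond$-constraint can be absorbed into constants, and the disc estimates transfer to $\Omega$, giving the corollary with a domain-dependent constant $C$.
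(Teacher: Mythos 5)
Your proposal is correct and follows essentially the same route as the paper: Corollary~\ref{coro:main2} is there obtained as the $K=0$ case of Theorem~\ref{thm:main2}, and for $k=0$ the Zernike polynomials $\psi_{j,0}$ are exactly your normalised monomials $z^j,\bar z^{|j|}$, the matrix elements $\langle(F\eta)f_m,f_n\rangle$ have the same sign-block/shifted-diagonal structure, and the trigamma bounds the paper invokes amount to your two-sided comparison of $S_j$ with $1/(j+1)$. The only substantive difference is that you evaluate the Hilbert--Schmidt norm exactly on harmonic perturbations rather than lower-bounding it over a subset of the indices, which yields the marginally sharper disc constant $\sqrt{\pi/2}$ in place of the paper's $\sqrt{\pi}$; the conformal transfer to $\Omega$, including the treatment of the non-unitary boundary weight and the mean-zero constraint, is the same as in section~\ref{sec:conformal}.
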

	
	\subsection{Main results}
	
	Let us next present the details of our main results, which are given on the unit disk $D$ in $\R^2\simeq \C$. The involved spaces and estimates can be transferred to $\Omega$ with the help of a Riemann mapping, as outlined in section~\ref{sec:conformal}. In particular, the constant in Corollary~\ref{coro:main2} satisfies
    \begin{equation} \label{eq:constant}
	    C \leq  \sqrt{\pi} \; \frac{\max_{z \in \partial\Omega} |\Psi'(z)| }{\min_{z \in \partial\Omega}|\Psi'(z)|}
    \end{equation}
    for any conformal mapping $\Psi$ of $\Omega$ onto $D$, with $\Psi'$ denoting its complex derivative. To simplify the presentation, we denote the analysed operator by $F$ for both domains $\Omega$ and $D$.
	
	Let $\psi_{j,k}$ for $j\in\Z$ and $k\in\N_0$ comprise the orthonormal Zernike polynomial basis for $L^2(D)$. In the polar coordinates $x = r\e^{\I\theta}\in D$,
	\begin{equation} \label{eq:Zernike}
		\psi_{j,k}(r\e^{\I\theta}) = \sqrt{\frac{\abs{j}+2k+1}{\pi}}R^{\abs{j}}_{\abs{j}+2k}(r)\e^{\I j\theta}, \qquad j\in\mathbb{Z},\enskip k\in\mathbb{N}_0,
	\end{equation}
	where
	\begin{equation*}
		R^{\abs{j}}_{\abs{j}+2k}(r) = \sum_{q=0}^k(-1)^q\binom{\abs{j}+2k-q}{q}\binom{\abs{j}+2k-2q}{k-q}r^{\abs{j}+2k-2q}.
	\end{equation*}
	We define a family of infinite-dimensional subspaces via
	\begin{equation*}
		\mathcal{H}_k = \overline{\mspan\{\psi_{j,k} \mid j\in\Z\}}, \qquad k \in \mathbb{N}_0,
	\end{equation*} 
	with the closure taken in $L^2(D)$, and set 
	\begin{equation*}
		\mathcal{W}_K = \bigoplus_{k=0}^K \mathcal{H}_k, \qquad K \in \mathbb{N}_0.
	\end{equation*}
	Since it is known that (cf.~Remark~\ref{remark:density})
	\begin{equation*}
		L^2(D) = \bigoplus_{k=0}^\infty \mathcal{H}_k,
	\end{equation*}
	$\{\mathcal{W}_K\}_{K=0}^\infty$ is an increasing family of closed subspaces approaching $L^2(D)$. Let $\eta\in L^2(D)$, i.e.\ there is an $\ell^2(\Z\times \N_0)$ sequence of coefficients $c_{j,k}$ such that
	\begin{equation} \label{eq:etaL2}
		\eta = \sum_{k=0}^\infty \underbrace{\sum_{j\in\Z} c_{j,k}\psi_{j,k}}_{\eta_k} = \sum_{k=0}^\infty \eta_k,
	\end{equation}
	where $\eta_k$ is the orthogonal projection of $\eta$ onto $\mathcal{H}_k$, defined via the coefficients $\{c_{j,k}\}_{j\in\Z}$. 
	
	Our first main result gives an exact reconstruction method for recovering $\eta\in L^2(D)$ from the linearised data $F\eta$. Moreover, if $\eta\in\mathcal{W}_K$, then the method only requires data associated to $2K+2$ Neumann boundary values. In the following theorem, $\sgn$ is the sign function with the convention $\sgn(0)=1$ and
	\begin{equation} \label{eq:fj}
		f_m(\e^{\I\theta}) = \frac{1}{\sqrt{2\pi}}\e^{\I m\theta}, \qquad m\in\Z,
	\end{equation}
    are the complex Fourier orthonormal basis functions for $L^2(\partial D)$.
	\begin{theorem} \label{thm:main1}
		Let $\eta\in L^2(D)$ be given as in \eqref{eq:etaL2}. Then
		\begin{align*}
			c_{j,k} &{}={} -\sqrt{\pi(\abs{j}+2k+1)} \binom{\abs{j}+2k}{k} \bigl\langle (F\eta)f_{\sgn(j)(k+1)},f_{\sgn(j)(\abs{j}+k+1)} \bigr\rangle_{L^2(\partial D)}\\[1mm]
			&\phantom{{}={}} -\sum_{q=0}^{k-1}c_{j,q} \frac{\sqrt{(\abs{j}+2k+1)(\abs{j}+2q+1)}}{\abs{j}+k+q+1}\binom{\abs{j}+2k}{k-q}
		\end{align*}
        for all $j\in\Z$ and $k\in\N_0$.
	\end{theorem}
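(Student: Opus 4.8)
The plan is to convert each data entry into a single complex moment of $\eta$ and then to invert the resulting system, which turns out to be lower triangular in $k$. I would first record the harmonic extensions on $D$: for $m\in\Z\setminus\{0\}$ the function $r^{|m|}\e^{\I m\theta}$ is harmonic with normal derivative $|m|\e^{\I m\theta}$ on $\partial D$, so the harmonic function with Neumann trace $f_m$ is $u_m=\tfrac{1}{|m|\sqrt{2\pi}}\,r^{|m|}\e^{\I m\theta}$, i.e.\ $\tfrac{1}{m\sqrt{2\pi}}z^m$ for $m>0$ and $\tfrac{1}{|m|\sqrt{2\pi}}\bar z^{|m|}$ for $m<0$. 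Writing the bilinear integrand in \eqref{eq:F} through the Wirtinger derivatives $\partial=\tfrac12(\partial_x-\I\partial_y)$ and $\bar\partial=\tfrac12(\partial_x+\I\partial_y)$ gives $\nabla u_p\cdot\overline{\nabla u_q}=2\bigl(\partial u_p\,\overline{\partial u_q}+\bar\partial u_p\,\overline{\bar\partial u_q}\bigr)$. For the indices in the theorem, $p=\sgn(j)(k+1)$ and $q=\sgn(j)(|j|+k+1)$ share the sign of $j$ (with the convention $\sgn(0)=1$), so $u_p,u_q$ are both holomorphic or both anti-holomorphic, exactly one Wirtinger term survives, and the three sign cases collapse uniformly to $\nabla u_p\cdot\overline{\nabla u_q}=\tfrac{1}{\pi}\,r^{|j|+2k}\e^{-\I j\theta}$.

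Substituting this into \eqref{eq:F} and inserting the Zernike expansion \eqref{eq:etaL2}, I would integrate in $\theta$ first; the angular factors yield $\int_0^{2\pi}\e^{\I j'\theta}\e^{-\I j\theta}\,\di\theta=2\pi\delta_{j',j}$, so only the column $j'=j$ contributes and
\begin{equation*}
  \bigl\langle(F\eta)f_{\sgn(j)(k+1)},f_{\sgn(j)(|j|+k+1)}\bigr\rangle=-\frac{2}{\sqrt{\pi}}\sum_{k'=0}^{\infty}\sqrt{|j|+2k'+1}\;c_{j,k'}\int_0^1 R^{|j|}_{|j|+2k'}(r)\,r^{|j|+2k+1}\,\di r.
\end{equation*}
The sum is finite: $r^{|j|+2k}$ is a polynomial of degree $|j|+2k$ of the same parity as $|j|$, hence lies in $\mspan\{R^{|j|}_{|j|},\dots,R^{|j|}_{|j|+2k}\}$, so by orthogonality of the radial polynomials in $L^2((0,1),r\,\di r)$ the integral vanishes whenever $k'>k$. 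This truncation is precisely the lower-triangular structure that makes the inductive reconstruction possible.

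The heart of the argument, and what I expect to be the main obstacle, is the closed-form evaluation of the radial moments for $0\le k'\le k$,
\begin{equation*}
  \int_0^1 R^{|j|}_{|j|+2k'}(r)\,r^{|j|+2k+1}\,\di r=\frac{1}{2(|j|+k+k'+1)}\,\frac{\binom{|j|+2k}{k-k'}}{\binom{|j|+2k}{k}}.
\end{equation*}
One direct route is to insert the explicit polynomial form of $R^{|j|}_{|j|+2k'}$ and integrate termwise, reducing the claim to the single binomial sum $\sum_{q=0}^{k'}(-1)^q\binom{|j|+2k'-q}{q}\binom{|j|+2k'-2q}{k'-q}\tfrac{1}{2(|j|+k+k'-q+1)}$; alternatively one may use the Jacobi representation $R^{|j|}_{|j|+2k'}(r)=(-1)^{k'}r^{|j|}P^{(|j|,0)}_{k'}(1-2r^2)$ together with the standard Jacobi moment formula, or argue by induction on $k'$ via the three-term recurrence. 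The diagonal case $k'=k$ gives the nonzero value $\tfrac{1}{2(|j|+2k+1)}\binom{|j|+2k}{k}^{-1}$, which pins down the normalisation.

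Finally I would isolate the $k'=k$ term and solve for $c_{j,k}$. The diagonal contribution is $-\tfrac{2}{\sqrt{\pi}}\cdot\tfrac{1}{2\sqrt{|j|+2k+1}}\binom{|j|+2k}{k}^{-1}c_{j,k}$, so dividing through reproduces the stated prefactor $-\sqrt{\pi(|j|+2k+1)}\binom{|j|+2k}{k}$ in front of the data entry. Substituting the moment identity into the remaining terms $k'=0,\dots,k-1$ and cancelling $\binom{|j|+2k}{k}$ turns each coefficient into $\tfrac{\sqrt{(|j|+2k+1)(|j|+2k'+1)}}{|j|+k+k'+1}\binom{|j|+2k}{k-k'}$, which is exactly the claimed correction sum (with $q=k'$). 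Since every index $f_{\sgn(j)(k+1)}$ and $f_{\sgn(j)(|j|+k+1)}$ is nonconstant and hence lies in $L^2_\diamond(\partial D)$, this derivation is valid for all $j\in\Z$ and $k\in\N_0$, completing the proof.
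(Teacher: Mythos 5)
Your proposal is correct and follows essentially the same route as the paper: both reduce $\bigl\langle (F\eta)f_{\sgn(j)(k+1)},f_{\sgn(j)(\abs{j}+k+1)}\bigr\rangle$ to radial Zernike moments via the identity $\nabla u_m\cdot\overline{\nabla u_n}=\tfrac{1}{\pi}r^{\abs{m}+\abs{n}-2}\e^{\I(m-n)\theta}$ and angular orthogonality, and your closed-form moment formula is precisely the expansion coefficient $d_{\abs{j},k',k}$ of \eqref{eq:d_coefficient} combined with the normalisation \eqref{eq:Rorthogonal}, which the paper takes from \cite{Janssen2004} rather than re-deriving. The only cosmetic difference is that the paper applies $F$ to $\eta-\sum_{q<k}\eta_q$ to annihilate the sub-diagonal terms, whereas you solve the same lower-triangular system by moving those terms to the right-hand side.
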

	Let us carefully dissect how Theorem~\ref{thm:main1} can be implemented as a reconstruction method: To start with, $\eta_0$, i.e.~the coefficients $\{c_{j,0}\}_{j \in \Z}$, are reconstructed from the data $(F\eta)f_{\pm 1}$. Next, $\eta_1$,~i.e.~the coefficients $\{c_{j,1}\}_{j \in \Z}$, can be determined using the coefficients for $\eta_0$ and the data $(F\eta)f_{\pm 2}$. This process can be continued so that $\eta_k$ is reconstructed from $\eta_0,\dots,\eta_{k-1}$ and the data $(F\eta)f_{\pm(k+1)}$ for any $k\in\N_0$. Hence, the whole of $\eta$ can be inductively reconstructed one orthogonal component at a time. Due to the material in section~\ref{sec:conformal}, this reconstruction method also generalises for $\Omega$ via employing the aforementioned Riemann mapping $\Psi : \Omega\to D$. Indeed, if $\eta\in L^2(\Omega)$, one can define $\widetilde{\eta} = \eta \circ \Psi^{-1}$ in $L^2(D)$ and use \eqref{eq:transformed} in section~\ref{sec:conformal} to obtain
	\begin{equation*}
		\inner{(F\widetilde{\eta})f_{m},f_{n}}_{L^2(\partial D)} = \inner{(F\eta)\widehat{f}_{m},\widehat{f}_{n}}_{L^2(\partial \Omega)}
	\end{equation*}
	where $\widehat{f}_m = \abs{\Psi'}(f_m\circ\Psi)$. This demonstrates how the Neumann boundary values should be chosen on $\partial \Omega$ to reconstruct $\widetilde{\eta}$ via Theorem~\ref{thm:main1}. The original perturbation can subsequently be recovered as $\eta = \widetilde{\eta}\circ\Psi$.

    Next we shall focus on stability. If $\eta\in\mathcal{W}_K$, then \eqref{eq:etaL2} becomes
	\begin{equation} \label{eq:etasum}
		\eta = \sum_{j\in\Z}\sum_{k=0}^K c_{j,k}\psi_{j,k}.
	\end{equation}
	Based on the representation \eqref{eq:etasum}, we define a special {\em subset} of $\mathcal{W}_K$:
	\begin{equation} \label{eq:Aspace}
		\mathcal{A}_K = \bigl\{ \eta\in\mathcal{W}_K \mid \redel(c_{j,k}\overline{c_{j,k'}}) \geq 0 \text{ for } j\in\Z, \ k,k'\in\{0,\dots,K\} \bigr\}.
	\end{equation}
	Note that
	\begin{equation*}
		\bigcup_{k=0}^K \mathcal{H}_k \subseteq \mathcal{A}_K \subseteq \mathcal{W}_K.
	\end{equation*}
	Moreover, if the coefficients $c_{j,k}$ are real and the sign of $c_{j,k}$ (ignoring zero-coefficients) only depends on $j$, then $\eta\in\mathcal{A}_K$.
	
	Our second main result shows that the Hilbert--Schmidt structure for $F$ extends to perturbations in $\mathcal{W}_K$ and yields Lipschitz stability on $\mathcal{A}_K$. Below $H_K$ is the $K$'th harmonic number, with $H_0 = 0$.
	\begin{theorem} \label{thm:main2}
		For each $K\in\N_0$, $F\in \mathscr{L}(\mathcal{W}_K,\mathscr{L}_{\textup{HS}}(L^2_\diamond(\partial D)))$ with
		\begin{equation} \label{eq:Fbnd}
			\norm{F\eta}_{\mathscr{L}_{\textup{HS}}(L^2_\diamond(\partial D))} \leq \frac{2}{\sqrt{\pi}}\sqrt{2K+H_K+4} \,\norm{\eta}_{L^2(D)}, \qquad \eta\in \mathcal{W}_K.
		\end{equation}
		On $\mathcal{A}_K$ there is Lipschitz stability:
		\begin{equation} \label{eq:Fstability}
			 \norm{\eta}_{L^2(D)} \leq \sqrt{2\pi}\binom{2K}{K}\norm{F\eta}_{\mathscr{L}_{\textup{HS}}(L^2_\diamond(\partial D))}, \qquad \eta\in \mathcal{A}_K.
		\end{equation}
		For $K = 0$ the Lipschitz constant in \eqref{eq:Fstability} can be improved to $\sqrt{\pi}$.
	\end{theorem}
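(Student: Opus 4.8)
The plan is to diagonalise the action of $F$ in the Fourier basis $\{f_m\}_{m\in\Z\setminus\{0\}}$ of $L^2_\diamond(\partial D)$ and reduce the Hilbert--Schmidt norm to a sum of finite-dimensional quadratic forms indexed by the angular frequency. First I would compute the harmonic extensions of the boundary data: the function with Neumann trace $f_m$ is $u_m = (\abs{m}\sqrt{2\pi})^{-1} z^{m}$ for $m>0$ and $u_m=(\abs{m}\sqrt{2\pi})^{-1}\bar z^{\abs{m}}$ for $m<0$, with $z=r\e^{\I\theta}$. A short Wirtinger-calculus computation then gives $\nabla u_m\cdot\overline{\nabla u_n}=\pi^{-1}z^{m-1}\bar z^{n-1}$ when $m,n>0$, the complex-conjugate expression when $m,n<0$, and $0$ when $m,n$ have opposite signs. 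Hence, by \eqref{eq:F}, the matrix of $F\eta$ is block diagonal with respect to the splitting into positive and negative frequencies, and within each block the entry $\inner{(F\eta)f_m,f_n}_{L^2(\partial D)}$ only sees the angular Fourier component of $\eta$ of order $n-m$ (respectively $\abs{m}-\abs{n}$). Writing the order-$j$ component of $\eta$ as $\rho_j(r)\e^{\I j\theta}$ with $\rho_j=\sum_{k=0}^K c_{j,k}\sqrt{(\abs{j}+2k+1)/\pi}\,R^{\abs{j}}_{\abs{j}+2k}$, every such entry becomes, up to an explicit constant, the radial moment $\int_0^1\rho_j(r)\,r^{\abs{j}+2\ell+1}\,\di r$ for some $\ell\in\N_0$, and each order $j$ contributes exactly once in each of the two blocks.

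The next step is an explicit moment identity: writing $p=\abs{j}$, the integral $I(k,\ell):=\int_0^1 R^{p}_{p+2k}(r)\,r^{p+2\ell+1}\,\di r$ vanishes for $\ell<k$ (by orthogonality of the Zernike radials) and equals a positive ratio of factorials for $\ell\ge k$; in particular the pivot $I(k,k)=\bigl(2(p+2k+1)\binom{p+2k}{k}\bigr)^{-1}$, consistent with the leading coefficient appearing in Theorem~\ref{thm:main1}. Collecting the contributions of both frequency blocks, I obtain the representation $\norm{F\eta}_{\mathscr{L}_{\textup{HS}}(L^2_\diamond(\partial D))}^2=\tfrac{8}{\pi}\sum_{j\in\Z}\inner{G_j\,c^{(j)},c^{(j)}}$, where $c^{(j)}=(c_{j,k})_{k=0}^K$ and $G_j$ is the $(K{+}1)\times(K{+}1)$ Gram matrix with entries $(G_j)_{k,k'}=\sqrt{(p+2k+1)(p+2k'+1)}\sum_{\ell\ge\max(k,k')}I(k,\ell)I(k',\ell)$. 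Since every $I(k,\ell)\ge 0$, all entries of $G_j$ are non-negative, and by orthonormality of the Zernike basis $\norm{\eta}_{L^2(D)}^2=\sum_{j\in\Z}\norm{c^{(j)}}^2$. Thus both claimed estimates reduce to two-sided bounds on the forms $\inner{G_j\,\cdot\,,\cdot\,}$ that are uniform in $j$.

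For the continuity estimate \eqref{eq:Fbnd} I would bound the operator norm of each (positive semidefinite) $G_j$ by its trace $\sum_{k=0}^K(p+2k+1)\sum_{\ell\ge k}I(k,\ell)^2$, evaluate the resulting rational sums in closed form, and take the supremum over $j$; this is where the harmonic number $H_K$ enters, through the telescoping partial-fraction sums, producing the factor $2K+H_K+4$. For the stability estimate \eqref{eq:Fstability} the role of the set $\mathcal{A}_K$ is decisive: its defining condition \eqref{eq:Aspace} makes every term of $\inner{G_j c^{(j)},c^{(j)}}=\sum_{k,k'}\redel(c_{j,k}\overline{c_{j,k'}})(G_j)_{k,k'}$ non-negative, so the off-diagonal ones may all be discarded to leave $\inner{G_j c^{(j)},c^{(j)}}\ge\sum_k\abs{c_{j,k}}^2(G_j)_{k,k}$. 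Stability then follows from a uniform-in-$j$ lower bound on the diagonal entries $(G_j)_{k,k}=(p+2k+1)\sum_{\ell\ge k}I(k,\ell)^2$; tracking the worst such bound over $k\le K$ and all $j$ yields the constant $\binom{2K}{K}$ (not necessarily optimal), while for $K=0$ there is only the single entry $(G_j)_{0,0}=\tfrac{p+1}{4}\sum_{n>p}n^{-2}$, whose sharper estimate gives the improved constant $\sqrt{\pi}$.

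The main obstacle is precisely this uniform control, over all angular frequencies, of the infinite moment sums $\sum_{\ell\ge k}I(k,\ell)^2$ constituting $G_j$. The delicate point is the regime $\abs{j}\to\infty$: although each individual moment $I(k,\ell)$ degenerates, the diagonal sums $(G_j)_{k,k}$ must be shown to remain bounded away from $0$ (for stability) and uniformly bounded (for continuity), so that neither constant blows up. Establishing the requisite closed-form summation identities for $I(k,\ell)$ and their monotonicity in $p$ — rather than any single step of the reduction itself — is the technical heart of the argument.
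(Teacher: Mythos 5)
Your outline is, up to repackaging, the paper's own argument: the Fourier-basis diagonalisation with $\nabla u_m\cdot\overline{\nabla u_n}=\pi^{-1}r^{\abs{m}+\abs{n}-2}\e^{\I(m-n)\theta}$ for $mn>0$, the reduction of each matrix entry to a radial Zernike moment $I(k,\ell)$, and the observation that the $\mathcal{A}_K$ condition combined with the non-negativity of all these moments lets one discard the off-diagonal terms of the quadratic form are exactly the paper's steps (carried out entrywise there rather than through Gram matrices $G_j$; your trace bound for \eqref{eq:Fbnd} is literally the paper's Cauchy--Schwarz step, and your normalisations, including $I(k,k)$ and the factor $8/\pi$, check out). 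One caveat on the upper bound: the sums $\sum_{\ell\ge k}I(k,\ell)^2$ have no elementary closed form --- they are trigamma values --- so what is actually needed is the elementary estimate $I(k,\ell)\le\tfrac{1}{2}(\abs{j}+\ell+k+1)^{-1}$, coming from the Pochhammer ratio $(\ell)_k/(\abs{j}+\ell+k)_k\le 1$, together with the two-sided trigamma bounds $\tfrac{2x+1}{2x^2}<\sum_{q\ge 0}(q+x)^{-2}<\tfrac{x+1}{x^2}$ that the paper quotes from \cite{Guo2013}; with these, your route does produce the factor $2K+H_K+4$.

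The genuine gap is the step you yourself flag as the technical heart: the $j$-uniform lower bound $(G_j)_{k,k}\gtrsim\binom{2k}{k}^{-2}$ is asserted but no mechanism for it is given, and the naive route fails. The ratios $(\ell)_k/(\abs{j}+\ell+k)_k$ are \emph{not} bounded below uniformly in $\abs{j}$ for fixed $\ell$ --- at the pivot $\ell=k$ the moment decays like $\binom{\abs{j}+2k}{k}^{-1}$ --- so no termwise bound valid for all $\ell\ge k$ can work. The paper's device is to discard every moment with $\ell<\abs{j}+k$ and keep only the tail $\ell=\abs{j}+k+q$, $q\ge 0$ (equivalently, only the matrix entries with row indices $m_{q,j,k}=\sgn(j)(\abs{j}+q+k+1)$): on that restricted range a three-step monotonicity argument (in $\abs{j}$, then in $q$, then in the product index) yields $(\ell)_k/(\abs{j}+\ell+k)_k\ge\binom{2k}{k}^{-1}$ uniformly, and the loss from discarding the head is harmless because the surviving sum $\sum_{q\ge 0}(2\abs{j}+2k+1+q)^{-2}$ is still of order $(2\abs{j}+2k+1)^{-1}$, which the prefactor $\abs{j}+2k+1$ restores to an absolute constant via the lower trigamma bound. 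Without this (or an equivalent) selection of a ``good'' subfamily of moments, your stability constant $\sqrt{2\pi}\binom{2K}{K}$ and the $K=0$ improvement remain unproven; everything else in your outline is sound.
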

	
	In particular, Theorem~\ref{thm:main2} shows that on the infinite-dimensional $\mathcal{H}_k$ spaces,
	\begin{equation*}
		\norm{\eta}_{L^2(D)} \leq \sqrt{2\pi}\binom{2k}{k}\norm{F\eta}_{\mathscr{L}_{\textup{HS}}(L^2_\diamond(\partial D))}, \qquad \eta\in \mathcal{H}_k.
	\end{equation*}
	Notice that $\mathcal{H}_0$ is the space of square-integrable harmonic functions on $D$ since $\psi_{j,0}$, $j \in \Z$, are the standard harmonic basis functions for the unit disk. In consequence, $\mathcal{H}_0$ equals $\mathcal{H}_0(\Omega)$ up to a conformal mapping. Likewise, the $\mathcal{W}_K$ spaces can be transferred to $\Omega$ via a conformal mapping and, as outlined in section~\ref{sec:conformal}, this only modifies the constants in Theorem~\ref{thm:main2} by multiples that explicitly depend on the employed mapping; cf.~\eqref{eq:constant}. In particular, the $K$-dependence remains as in Theorem~\ref{thm:main2}. Moreover, the conformally transformed $\mathcal{H}_k$ spaces form an orthogonal decomposition of $L^2(\Omega)$ in a weighted inner product (equivalent to the standard inner product) that also depends explicitly on the utilised conformal mapping.  Stirling's approximation reveals the asymptotic behaviour of the Lipschitz constant in \eqref{eq:Fstability} for a large $K$:
	\begin{equation*}
		\sqrt{2\pi}\binom{2K}{K} \simeq \sqrt{\frac{2}{K}}\,4^{K},
	\end{equation*}
	which demonstrates an expected exponential growth in $K$.
        
    It should be mentioned that Zernike polynomials have previously been used in a numerical study on the stability of the linearised Calder\'on problem in \cite{Allers91}.
	
	\subsection{Comments on the main results}
	
	We emphasise that our results are inherently two-dimensional: The Sobolev embeddings allowing to define $F$ as an operator in $\mathscr{L}(L^2(\Omega),\mathscr{L}(L_\diamond^2(\partial\Omega)))$ via \eqref{eq:F} are sharp and do not hold in higher dimensions. Moreover, $F\eta$ is not a Hilbert--Schmidt operator in higher dimensions, not even for a bounded perturbation~\cite[Appendix~A]{Garde2022a}. Interestingly, the proofs for both parts of Theorem~\ref{thm:main2} can be reduced to known bounds on the trigamma function. 

	Let us then briefly discuss why \eqref{eq:Fbnd} does not seem to follow from previous results. In two dimensions, \cite[Theorem~A.2]{Garde2022a} shows that if $L\in \mathscr{L}(H^{-s}_\diamond(\partial\Omega),H_\diamond^s(\partial\Omega))$ for some $s > \tfrac{1}{4}$, then $L$ is in fact a Hilbert--Schmidt operator in $\mathscr{L}_{\textup{HS}}(L^2_\diamond(\partial\Omega))$, and there exists $C>0$ (independent of $L$) such that
	\begin{equation*}
		\norm{L}_{\mathscr{L}_{\textup{HS}}(L^2_\diamond(\partial\Omega))} \leq C\norm{L}_{\mathscr{L}(H^{-s}_\diamond(\partial\Omega),H_\diamond^s(\partial\Omega))}.
	\end{equation*}
	As usual, the $\diamond$-symbol refers to a zero-mean condition. For $s=\frac{1}{2}$ we thus have 
	\begin{equation*}
		F\in \mathscr{L}(L^\infty(\Omega),\mathscr{L}(H^{-1/2}_\diamond(\partial\Omega),H_\diamond^{1/2}(\partial\Omega))) \subset \mathscr{L}(L^\infty(\Omega),\mathscr{L}_{\textup{HS}}(L^2_\diamond(\partial\Omega))),
	\end{equation*}
	implying  Hilbert--Schmidt continuous dependence of $F\eta$ on $\eta\in L^\infty(\Omega)$. However, defining $F$ via \eqref{eq:F} for $\eta\in L^2(\Omega)$ requires the associated Neumann boundary values to be in $L^2_\diamond(\partial\Omega)$ in order to enable the use of suitable Sobolev embeddings. Therefore, one does not know a priori that $F\eta$ is a Hilbert--Schmidt operator for $\eta\in L^2(\Omega)$, but the Hilbert--Schmidt property and the associated continuous dependence must indeed be separately proved for $\eta \in \mathcal{W}_K$ to establish Theorem~\ref{thm:main2}. 
    
	To our knowledge, Theorem~\ref{thm:main2} is the first \emph{infinite-dimensional} Lipschitz stability result in the linearised Calder\'on problem, which nicely complements the long tradition for Lipschitz stability results in \emph{finite-dimensional} settings for the nonlinear Calder\'on problem; see~e.g.~\cite{Harrach_2019,Alberti2019,Alberti2020,Alberti2022} for some recent general results. This suggests that it may be beneficial to consider stability in terms of ND maps, instead of DN maps, due to the more desirable topological properties of the former. It is also worth mentioning that our proofs do not rely on the standard arguments involving complex geometric optics solutions. Indeed, it seems to be the Hilbert space structures of $L^2(\Omega)$ perturbations and Hilbert--Schmidt operators that enable deducing Lipschitz stability for the infinite-dimensional $\mathcal{H}_k$ spaces, in comparison to the optimal logarithmic bounds in terms of operator norms in the infinite-dimensional linearised and nonlinear Calder\'on problems~\cite{Alessandrini1988,Alessandrini2012,Mandache2001,Koch2021,Nagayasu_2009,Li2022,Kow2022}. 
	
	In the related problem of detection and shape reconstruction of inclusions/obstacles, Lipschitz stability has recently been proved for general classes of polygonal and polyhedral inclusions \cite{Beretta2020,Beretta2021,Beretta2022}; for inclusions of small volume fraction this has already been known for some time~\cite{Friedman1989b}. Moreover, the injectivity for certain classes of unbounded coefficients, strictly contained in $L^2(\Omega)$, has been shown in the nonlinear Calder\'on problem in two dimensions~\cite{Astala2016,Carstea2016,Nachman2020}. The exact reconstruction of inclusions defined by $A_2$-Muckenhoupt coefficients has also recently been proved in an arbitrary spatial dimension $d\geq 2$~\cite{Garde2022b}.  
	
	The standard spaces $L^\infty(\Omega)$ and $\mathscr{L}(L^2_\diamond(\partial\Omega))$ involved in the linearisation of the Calder\'on problem, with the ND map as the data, are non-separable Banach spaces, with dual spaces that are poorly characterised for the use in practical computations where adjoints are needed. In contrast, resorting to $L^2(\Omega)$ perturbations and the Hilbert--Schmidt operator topology allows a simple characterisation for the adjoint of $F$. To demonstrate this, let $\widetilde{\mathcal{W}}_K$ be the conformally transformed $\mathcal{W}_K$ onto the domain $\Omega$. Both $\widetilde{\mathcal{W}}_K$ and $\mathscr{L}_{\textup{HS}}(L^2_\diamond(\partial\Omega))$ are separable Hilbert spaces with explicit orthonormal bases. Let $P_K$ be the orthogonal projection of $L^2(\Omega)$ onto $\widetilde{\mathcal{W}}_K$, let $\{g_m\}_{m\in\mathcal{I}}$ be an orthonormal basis for $L^2_\diamond(\partial\Omega)$, and set $u_m = u_{g_m}$ in \eqref{eq:F}. A simple calculation reveals that the adjoint $F^* : \mathscr{L}_{\textup{HS}}(L^2_\diamond(\partial\Omega)) \to\widetilde{\mathcal{W}}_K$ is given by
	\begin{equation} \label{eq:Fadj}
		F^*G = -\sum_{m,n\in\mathcal{I}} \inner{Gg_m,g_n}_{L^2(\partial\Omega)} P_K(\overline{\nabla u_m}\cdot \nabla u_n).
	\end{equation}
	Hence, even if the perturbation $\eta$ is bounded, e.g.\ it belongs to one of the spaces $\widetilde{W}_K\cap L^\infty(\Omega)$, the Hilbert--Schmidt topology enables practical optimisation methods for reconstruction. Note also that Lipschitz stability leads to favourable convergence rates for iterative algorithms; see~e.g.~\cite{Hoop2012}. 
	
	Due to the bound in \eqref{eq:Fstability},  $F|_{\mathcal{H}_k}$ has a closed range for the closed subspace $\mathcal{H}_k$. This is a desirable property for the series reversion approximations introduced in~\cite{Garde2022a}, although it remains an open problem to adapt the method of~\cite{Garde2022a} for $L^2(\Omega)$ perturbations. Be that as it may, the original motivation for this work was the question of whether there exists an infinite-dimensional space of perturbations satisfying the assumptions in \cite{Garde2022a}. 
	
	\subsection{Article structure} This article is organised as follows. Section~\ref{sec:conformal} outlines how the problem setting can be reduced to the unit disk domain $D$. Sections~\ref{sec:proofprop},~\ref{sec:proofmain2},~and~\ref{sec:reconstruction} present the proofs for Proposition~\ref{prop:L2}, Theorem~\ref{thm:main2}, and Theorem~\ref{thm:main1}, respectively. In section~\ref{sec:injectivity} we present a simple alternative proof for the injectivity of $F$ on $L^2(\Omega)$. 
	
	\section{Reduction to the unit disk} \label{sec:conformal}
	
	In this section, we demonstrate that it is sufficient to replace $\Omega$ by $D$ in the forthcoming proofs by virtue of the Riemann mapping theorem. To this end, let $\Psi : \Omega \to D$ be a Riemann mapping with the inverse $\Phi = \Psi^{-1}$. The complex derivatives of $\Psi$ and $\Phi$, when interpreted as mappings between subsets of $\C$, are denoted by $\Psi'$ and $\Phi'$. Due to the assumed $C^{1,\alpha}$ boundary regularity for $\Omega$, the Kellogg--Warschawski theorem (see~\cite[Theorem~3.6 \& Exercise~3.3.5]{Pommerenke1992} and \cite[Theorem~12]{Warschawski1932}) ensures that $\Psi$ and $\Phi$ have $C^{1,\alpha}$ extensions, with non-vanishing first derivatives, to the closures of the respective domains. In particular, $\Psi'$ and $\Phi'$ are bounded on $\overline{\Omega}$ and $\overline{D}$, respectively.  The absolute value $\abs{\Phi'}$ restricted to $\partial D$ is the Jacobian determinant of the boundary transformation $\Phi : \partial D\to \partial \Omega$, while $\abs{\Phi'}^2$ is the Jacobian determinant for the domain transformation $\Phi : D\to \Omega$ by virtue of the Cauchy--Riemann equations. The analogous conclusions naturally apply to $|\Psi'|$ as well.

	The quantities defined on $\Omega$ in \eqref{eq:F} are mapped via $\widetilde\eta = \eta\circ\Phi$ and $\widetilde{u}_{\widetilde{f}} = u_f\circ\Phi$ to functions on~$D$. The conformal invariance of \eqref{eq:F} implies
	\begin{equation} \label{eq:transformed}
		\int_\Omega \eta\nabla u_f\cdot \overline{\nabla u_g}\, \di x = \int_{D} \widetilde{\eta}\,\nabla \widetilde{u}_{\widetilde{f}}\cdot \overline{\nabla \widetilde{u}_{\widetilde{g}}}\, \di x, 
	\end{equation}
    which means that if either of the two integrals exists, then the same also applies to the other integral with the same value; section~\ref{sec:proofprop} below demonstrates that the right-hand side of \eqref{eq:transformed} is well defined for $\eta \in L^2(\Omega)$ and $f,g \in L^2_\diamond(\partial \Omega)$. Moreover, $\widetilde{u}_{\widetilde{f}}\in H^1(D)/\mathbb{C}$ admits a characterisation as a harmonic function on $D$ with the Neumann boundary value 
	\begin{equation*}
		\widetilde{f} = \abs{\Phi'}(f\circ\Phi)
	\end{equation*}
	in $L_\diamond^2(\partial D)$; see~e.g.~\cite[Lemma~4.1 and Remark~4.1]{Hyvonen_2018} based on the $C^{1,\alpha}$ boundary regularity. Obviously, one can choose $\widetilde{u}_{\widetilde{f}}$ and $\widetilde{u}_{\widetilde{g}}$ to be elements of $H_\diamond^1(D)$ in \eqref{eq:transformed}. 
		
    Let $\norm{\Psi'}_{L^\infty(\partial\Omega)}$ and $\norm{\Phi'}_{L^\infty(\partial D)}$ denote norms of $\abs{\Psi'}$ and $\abs{\Phi'}$ on the domain boundaries. The maximum modulus principle entails 
    \begin{equation*}
    	\norm{\Psi'}_{L^\infty(\Omega)} = \norm{\Psi'}_{L^\infty(\partial\Omega)} \qquad \text{and} \qquad \norm{\Phi'}_{L^\infty(D)} = \norm{\Phi'}_{L^\infty(\partial D)}.
    \end{equation*}
	Thus we have
	\begin{equation} \label{eq:eta_trans}
		\frac{1}{\norm{\Psi'}_{L^\infty(\partial\Omega)}}\norm{\widetilde{\eta}}_{L^2(D)}\leq\norm{\eta}_{L^2(\Omega)} \leq \norm{\Phi'}_{L^\infty(\partial D)}\norm{\widetilde{\eta}}_{L^2(D)},
	\end{equation}
	as well as
    \begin{equation} \label{eq:f_trans}
	    \frac{1}{\norm{\Phi'}_{L^\infty(\partial D)}^{1/2}}\norm{\widetilde{f}}_{L^2(\partial D)}\leq\norm{f}_{L^2(\partial \Omega)} \leq \norm{\Psi'}_{L^\infty(\partial \Omega)}^{1/2}\norm{\widetilde{f}}_{L^2(\partial D)}.
    \end{equation}
	The Hilbert--Schmidt norms 
	\begin{equation*}
		\norm{F\eta}_{\mathscr{L}_{\textup{HS}}(L^2_\diamond(\partial\Omega))} \qquad \text{and} \qquad \norm{F\widetilde{\eta}}_{\mathscr{L}_{\textup{HS}}(L^2_\diamond(\partial D))}
	\end{equation*}
	are also equivalent (including the case of both being infinite). Recall that we denote the studied operator by the same symbol $F$ on both domains for simplicity. As this last equivalence requires a bit of work, we present a brief proof in what follows.
	
	Let $\{f_m\}_{m\in\Z}$ be the standard complex Fourier orthonormal basis for $L^2(\partial D)$ given in \eqref{eq:fj}, meaning that $\{f_m\}_{m\in\Z\setminus\{0\}}$ is an orthonormal basis for $L^2_\diamond(\partial D)$. Setting $\widehat{f}_m = \abs{\Psi'}(f_m\circ\Psi)$ defines an orthonormal basis $\{\widehat{f}_m\}_{m\in\Z}$  for $L^2(\partial \Omega)$ in the weighted $L^2$ inner product with the weight $\abs{\Psi'}^{-1}$. Consequently, $\{\abs{\Psi'}^{-1/2}\widehat{f}_m\}_{m\in\Z}$ is an orthonormal basis for $L^2(\partial\Omega)$ in the standard $L^2$ inner product.
	
	Let $P$ be the orthogonal projection of $L^2(\partial\Omega)$ onto $L^2_\diamond(\partial\Omega)$. By using \eqref{eq:transformed} and \eqref{eq:F}, we obtain
	\begin{align*}
		\norm{F\widetilde{\eta}}_{\mathscr{L}_{\textup{HS}}(L^2_\diamond(\partial D))}^2 &= \sum_{m,n \in \Z\setminus\{0\}}\big|\inner{(F\widetilde{\eta})f_m,f_n}_{L^2(\partial D)}\big|^2 = \sum_{m,n \in \Z\setminus\{0\}}\big|\inner{(F\eta)\widehat{f}_m,\widehat{f}_n}_{L^2(\partial\Omega)} \big|^2 \\
		&\leq \sum_{m,n \in \Z}\big|\big\langle\abs{\Psi'}^{1/2}(F\eta)P\abs{\Psi'}^{1/2}\abs{\Psi'}^{-1/2}\widehat{f}_m,\abs{\Psi'}^{-1/2}\widehat{f}_n \big\rangle_{L^2(\partial\Omega)}\big|^2 \\
		&= \big\| \abs{\Psi'}^{1/2}(F\eta)P\abs{\Psi'}^{1/2} \big\|_{\mathscr{L}_{\textup{HS}}(L^2(\partial\Omega))}^2 \\[1mm]
		&\leq \big\| \abs{\Psi'}^{1/2} \big\|_{\mathscr{L}(L^2(\partial\Omega))}^4 \norm{(F\eta)P}_{\mathscr{L}_{\textup{HS}}(L^2(\partial\Omega))}^2,
	\end{align*}
	where $\abs{\Psi'}^{1/2}$ is interpreted as a multiplication operator, and in the final step a standard norm inequality for products of operators is used; see~e.g.~\cite[Theorem~7.8(c)]{Weidmann1980}. Since $P$ projects onto the orthogonal complement of the constant function,
	\begin{equation*}
		\norm{(F\eta)P}_{\mathscr{L}_{\textup{HS}}(L^2(\partial\Omega))} = \norm{F\eta}_{\mathscr{L}_{\textup{HS}}(L_\diamond^2(\partial\Omega))}.
	\end{equation*}
	By also performing the analogous calculation in the opposite direction, we finally deduce
	\begin{equation} \label{eq:HSnormequiv}
	\frac{1}{\norm{\Psi'}_{L^\infty(\partial\Omega)}}\norm{F\widetilde{\eta}}_{\mathscr{L}_{\textup{HS}}(L_\diamond^2(\partial D))}	\leq \norm{F\eta}_{\mathscr{L}_{\textup{HS}}(L^2_\diamond(\partial\Omega))} \leq \norm{\Phi'}_{L^\infty(\partial D)}\norm{F\widetilde{\eta}}_{\mathscr{L}_{\textup{HS}}(L_\diamond^2(\partial D))},
	\end{equation}
    where the identities $\norm{\abs{\Psi'}^{1/2}}_{\mathscr{L}(L^2(\partial\Omega))} = \norm{\Psi'}_{L^\infty(\partial\Omega)}^{1/2}$ and  $\norm{\abs{\Phi'}^{1/2}}_{\mathscr{L}(L^2(\partial D))} = \norm{\Phi'}_{L^\infty(\partial D)}^{1/2}$ have also been utilised.
	
	\section{Proof of Proposition~\ref{prop:L2}} \label{sec:proofprop}
	
	Based on \eqref{eq:transformed}, \eqref{eq:eta_trans}, and \eqref{eq:f_trans} in section~\ref{sec:conformal}, it is sufficient to prove the result on $D$. Let $\eta\in L^2(D)$ and let $u_f$ and $u_g$ be harmonic functions on $D$ with the Neumann boundary values $f,g\in L^2_\diamond(\partial D)$, respectively. According to \cite[Chapter~2, Remark~7.2]{Lions1972}, $u_f,u_g\in H^{3/2}(D)/\C$ with continuous dependence on the Neumann data,~i.e.
	\begin{equation} \label{eq:ufregular}
		\norm{u_f}_{H^{3/2}(D)/\C} \leq C\norm{f}_{L^2(\partial D)} \qquad \text{and} \qquad \norm{u_g}_{H^{3/2}(D)/\C} \leq C\norm{g}_{L^2(\partial D)}.
	\end{equation}
	Using the continuous embedding $H^{1/2}(D)\hookrightarrow L^4(D)$ (e.g.~\cite[Corollary~4.53]{Demengel2012}), we may estimate as follows:
	\begin{align*}
		\big| \inner{(F\eta)f,g}_{L^2(\partial D)} \big| &= \Bigl|\int_D \eta\nabla u_f\cdot\overline{\nabla u_g}\,\di x\Bigr| \\[1mm]
		&\leq \norm{\eta}_{L^2(D)}\norm{\nabla u_f}_{L^4(D)}\norm{\nabla u_g}_{L^4(D)} \\[1mm]
		&\leq C \norm{\eta}_{L^2(D)}\norm{\nabla u_f}_{H^{1/2}(D)}\norm{\nabla u_g}_{H^{1/2}(D)} \\[1mm]
		&\leq C \norm{\eta}_{L^2(D)}\norm{u_f}_{H^{3/2}(D)/\mathbb{C}}\norm{u_g}_{H^{3/2}(D)/\mathbb{C}}.
	\end{align*} 
	Combining this with \eqref{eq:ufregular} concludes the proof.  \hfill\qed

	\begin{remark}
		Denote by $u_f^\gamma$ the solution to \eqref{eq:cond_eq} for a (nonconstant) coefficient $\gamma$ and with $\Omega$ replaced by $D$. The mapping
	    \begin{equation*}
      		N_\gamma: H_\diamond^{s}(\partial D) \ni f \mapsto  u_f^\gamma \in H^{s+3/2}(D)/\C
      	\end{equation*}
      	is bounded for $s = \pm 1/2$ if $\gamma \in C^{0,1}(\overline{D}; \R)$; cf.~e.g.~\cite[Corollary 2.2.2.6]{Grisvard1985}, which can be modified for the quotient space. It thus follows from complex interpolation theory for Hilbert spaces (e.g.~\cite[Chapter~1, Theorems~5.1, 7.7, 9.6, 13.2]{Lions1972}) that the solution map $N_\gamma$ remains bounded for $s = 0$, i.e.~from $L_\diamond^{2}(\partial D)$ to  $H^{3/2}(D)/\C$, for $\gamma \in C^{0,1}(\overline{D}; \R)$. By the above analysis, this demonstrates that the Fr\'echet derivative $D\!\Lambda(\gamma)$ in the unit disk $D$ belongs to $\mathscr{L}(L^2(D), \mathscr{L}(L^2_\diamond(\partial D)))$ for a Lipschitz continuous coefficient $\gamma$. Based on the material in section~\ref{sec:conformal}, this result also extends for a $C^{1,\alpha}$ domain $\Omega$ with a Lipschitz continuous conductivity coefficient.
	\end{remark}
	
	\section{Proof of Theorem~\ref{thm:main2}} \label{sec:proofmain2}
	
	To shorten the notation, we denote  the standard $L^2(\partial D)$ inner product by $\inner{\,\cdot\,,\,\cdot\,}$ and the Hilbert--Schmidt norm on $\mathscr{L}_\textup{HS}(L^2_\diamond(\partial D))$ by $\norm{\,\cdot\,}_{\textup{HS}}$ in the following.
	
	\subsection{On the Zernike polynomials}

    Recall the Zernike polynomials $\psi_{j,k}$, $j \in \Z$, $k \in \N_0$, given as functions of the polar coordinates $x = r\e^{\I\theta}\in D$ in \eqref{eq:Zernike}. On the weighted $L^2((0,1))$ space 
	\begin{equation*}
		L^2_r((0,1)) = \biggl\{ f:(0,1)\to\mathbb{C} \text{ measurable} \ \Big | \  \int_0^1 \abs{f(r)}^2 r\,\di r < \infty \biggr\}
	\end{equation*}
	with the inner product
	\begin{equation*}
		\inner{f,g}_{L^2_r((0,1))} = \int_0^1 f(r)\overline{g(r)}r\,\di r,
	\end{equation*}
	the radial Zernike polynomials corresponding to the same angular frequency $j$ are orthogonal (see~e.g.~the original paper by Zernike \cite{Zernike1934}),
	\begin{equation} \label{eq:Rorthogonal}
		\big \langle R^{\abs{j}}_{\abs{j}+2k},R^{\abs{j}}_{\abs{j}+2k'} \big\rangle_{L^2_r((0,1))} = \frac{\delta_{k,k'}}{2\abs{j}+4k+2}, \qquad k,k' \in \N_0.
	\end{equation}
    This immediately leads to the orthonormality of $\{ \psi_{j,k} \}_{j \in \Z, k \in \N_0}$; we have included a comment on the density of the span of the Zernike polynomials in $L^2(D)$ as Remark~\ref{remark:density} below, because this matter is not often explicitly discussed in the literature.
        
    Monomials can be expanded as
	\begin{equation} \label{eq:rpower}
		r^{\abs{j}+2p} = \sum_{s=0}^p d_{\abs{j},s,p} R_{\abs{j}+2s}^{\abs{j}}(r), \qquad j \in \Z, \ p \in \N_0,
	\end{equation}
	where
	\begin{equation} \label{eq:d_coefficient}
		d_{\abs{j},s,p} = \frac{\abs{j}+2s+1}{\abs{j}+p+s+1}\binom{p}{s}\binom{\abs{j}+p+s}{s}^{-1} = \frac{(\abs{j}+2s+1)(p)_s}{(\abs{j}+p+s+1)(\abs{j}+p+s)_{s}}
	\end{equation}
	in terms of the Pochhammer symbols defined as $(p)_s = \frac{p!}{(p-s)!}$; see~e.g.~\cite[Equation~(35)]{Janssen2004}.
	
	\begin{remark} \label{remark:density}
		The density of 
		\begin{equation*}
			\mspan\{\psi_{j,k} \mid j\in\Z, \  k\in\N_0\}
		\end{equation*}
		in $L^2(D)$ is typically not explicitly discussed, and since the indices for the radial and angular parts are not independent, this might not be immediately obvious. We briefly argue this fact, ensuring that $\{\psi_{j,k}\}_{j\in\Z,k\in\N_0}$ is indeed an orthonormal basis for $L^2(D)$. Consider $h(re^{\I\theta}) = r^m \e^{\I n\theta}$ in polar coordinates for some $n\in\Z$ and $m\in\N_0$. Since $r^{\abs{n}+2p}\e^{\I n\theta}\in \mspan\{\psi_{n,k}\mid k\in\N_0\}$ for $p\in\N_0$ by \eqref{eq:rpower}, the approximation of $h$ in $L^2(D)$ by the Zernike polynomials can be straightforwardly reduced to approximating $r^{m}$ in $L^2((0,1))$ by polynomials in $\mathcal{P}_{\abs{n}} = \mspan\{r^{\abs{n}+2p} \mid p\in\N_0\}$. However, $\mathcal{P}_{\abs{n}}$ is dense in $L^2((0,1))$ by Lemma~\ref{lemma:polapprox} in section~\ref{sec:injectivity}. As
		\begin{equation*}
			\mspan\{r^m\e^{\I n\theta} \mid n\in\Z, \enskip m\in\N_0\}
		\end{equation*}
		is dense in $L^2(D)$, the argument is complete.
	\end{remark}
	
	\subsection{Infinite matrix coefficients for $F$}
	
	Recall that $\{f_m\}_{m\in\Z\setminus\{0\}}$ is an orthonormal basis for $L^2_\diamond(\partial D)$ with $f_m$ defined in \eqref{eq:fj}. Set $u_m = u_{f_m}$ and $u_n = u_{f_n}$ in \eqref{eq:F} with $\Omega$ replaced by $D$, and let us continue to work in the polar coordinates $x = r\e^{\I \theta}$. Since 
	\begin{equation*}
          u_m( r\e^{\I \theta}) = \frac{1}{\sqrt{2 \pi}|m|} r^{|m|} \e^{\I m\theta} \qquad {\rm and} \qquad 
		\nabla u_m = U_\theta \begin{pmatrix}
			\partial_r u_m \\
			r^{-1}\partial_\theta u_m
		\end{pmatrix},
	\end{equation*}
	where $U_\theta$ is a certain unitary matrix, a direct calculation reveals
	\begin{align} 
		\nabla u_m\cdot\overline{\nabla u_n} &= \partial_r u_m\overline{\partial_r u_n} + r^{-2}\partial_\theta u_m\overline{\partial_\theta u_n} \notag \\[1mm] 
		&= \frac{(\abs{mn}+mn)}{2\pi\abs{mn}} r^{\abs{m}+\abs{n}-2}\e^{\I(m-n)\theta}  \notag \\[1mm]
		&= \begin{dcases}
			\tfrac{1}{\pi}r^{\abs{m}+\abs{n}-2}\e^{\I (m-n)\theta} & \text{for } mn>0, \\[1mm]
			0 & \text{for } mn< 0.
		\end{dcases} \label{eq:Fip}
	\end{align}
    It is thus enough to concentrate on the case $mn>0$ in what follows.

	Let $\eta\in L^2(D)$, i.e.~there is an $\ell^2(\Z\times\N_0)$ sequence of coefficients $c_{j,k}$ such that
	\begin{equation*}
		\eta(r\e^{\I\theta}) = \sum_{j\in\Z}\sum_{k=0}^\infty c_{j,k} \psi_{j,k}(r\e^{\I\theta}) = \sum_{j\in\Z}\sum_{k=0}^\infty c_{j,k}\sqrt{\frac{\abs{j}+2k+1}{\pi}}R^{\abs{j}}_{\abs{j}+2k}(r)\e^{\I j\theta}.
	\end{equation*} 
    For $mn>0$ it follows from \eqref{eq:F} and \eqref{eq:Fip} that
	\begin{equation*}
		\inner{(F\eta)f_m,f_n} = -\frac{2}{\sqrt{\pi}}\sum_{k=0}^\infty c_{n-m,k}\sqrt{\abs{n-m}+2k+1} \, \big\langle R^{\abs{n-m}}_{\abs{n-m}+2k},r^{\abs{m}+\abs{n}-2}\big\rangle_{L^2_r((0,1))}.
	\end{equation*}
	Furthermore, if $mn>0$, then $\abs{m}+\abs{n}-2 = \abs{n-m}+2v_{m,n}$ where $v_{m,n} = \min\{\abs{m},\abs{n}\}-1$. Hence, \eqref{eq:rpower} gives
	\begin{equation*}
		r^{\abs{m}+\abs{n}-2} = r^{\abs{n-m}+2v_{m,n}} = \sum_{s=0}^{v_{m,n}} d_{\abs{n-m},s,v_{m,n}} R^{\abs{n-m}}_{\abs{n-m}+2s}(r), \qquad mn>0,
	\end{equation*}
	and an application of \eqref{eq:Rorthogonal} results in (cf.~\cite[Equation (3.11)]{Wunsche05})
	\begin{align*}
		\inner{(F\eta)f_m,f_n} &= -\frac{1}{\sqrt{\pi}}\sum_{k=0}^{v_{m,n}} c_{n-m,k}\frac{d_{\abs{n-m},k,v_{m,n}}}{\sqrt{\abs{n-m}+2k+1}}, \qquad mn>0.
	\end{align*}
	Substituting $n = m+j$ with some $j\in\Z$ yields
	\begin{align*} \label{eq:nonzero_coef}
		\inner{(F\eta)f_m,f_{m+j}} &= -\frac{1}{\sqrt{\pi}}\sum_{k=0}^{v_{m,m+j}} c_{j,k}\frac{d_{\abs{j},k,v_{m,m+j}}}{\sqrt{\abs{j}+2k+1}}, \qquad m(m+j)>0.
	\end{align*}
	Inserting \eqref{eq:d_coefficient} finally gives the sought-for infinite matrix coefficients
	\begin{equation} \label{eq:Fetacomp}
		\inner{(F\eta)f_m,f_{m+j}} = \begin{dcases}
			-\frac{1}{\sqrt{\pi}}\sum_{k=0}^{v_{m,m+j}} c_{j,k}\frac{\sqrt{\abs{j}+2k+1}}{\abs{j}+v_{m,m+j}+k+1}p_{m,j,k} & \text{for } m(m+j)>0, \\[1mm]
			0 & \text{for } m(m+j)< 0,
		\end{dcases}
	\end{equation}
	with 
	\begin{equation*}
		p_{m,j,k} = \frac{(v_{m,m+j})_k}{(\abs{j}+v_{m,m+j}+k)_{k}} = \begin{dcases}
		\prod_{s=1}^k \frac{v_{m,m+j}+1-s}{\abs{j}+v_{m,m+j}+k+1-s} & \text{for } k\in\N, \\[1mm]
		1 & \text{for } k = 0.	
		\end{dcases}
	\end{equation*}
	
	\subsection{Boundedness of $F$ on $\mathcal{W}_K$ in the Hilbert--Schmidt norm}
	
	We proceed to prove \eqref{eq:Fbnd}. Assume that $\eta\in \mathcal{W}_K$ for some $K\in\N_0$, i.e.~$c_{j,k} = 0$ whenever $k > K$. Because obviously
	\begin{equation*}
		p_{m,j,k} \leq 1,
	\end{equation*}
 	we may estimate for $m(m+j)>0$ as follows:
	\begin{align*}
		\big| \inner{(F\eta)f_m,f_{m+j}} \big|^2 &\leq \frac{1}{\pi}\biggl(\sum_{k=0}^{K} \abs{c_{j,k}}\frac{\sqrt{\abs{j}+2k+1}}{\abs{j}+v_{m,m+j}+k+1}\biggr)^2 \\
		&\leq \frac{4}{\pi}\Bigl(\sum_{k=0}^{K} \abs{c_{j,k}}^2\Bigr)\sum_{k=0}^K\frac{\abs{j}+2k+1}{(2\abs{j}+2v_{m,m+j}+2k+2)^2}.
	\end{align*}
	For $m(m+j)>0$ it also holds $2v_{m,m+j} = \abs{m}+\abs{m+j}-2 -\abs{j}$ and $\abs{m+j}\geq 1$, yielding
	\begin{equation} \label{eq:Fcomp1}
		\big| \inner{(F\eta)f_m,f_{m+j}} \big|^2 \leq \frac{4}{\pi}\Bigl(\sum_{k=0}^{K} \abs{c_{j,k}}^2\Bigr)\sum_{k=0}^K\frac{\abs{j}+2k+1}{(\abs{m}+\abs{j}+2k+1)^2}, \qquad m(m+j)>0.
	\end{equation}

	In the following we will need bounds on the trigamma function, see~e.g.~\cite[Lemma~1]{Guo2013}: 
	\begin{equation} \label{eq:trigamma}
		\frac{2x+1}{2x^2} < \sum_{q=0}^\infty\frac{1}{(q+x)^2} < \frac{x+1}{x^2}, \qquad x\in(0,\infty).
	\end{equation}
	This leads to
	\begin{align}
		\sum_{m\in\Z\setminus\{0\}}\sum_{k=0}^K\frac{\abs{j}+2k+1}{(\abs{m}+\abs{j}+2k+1)^2} &\leq 2\sum_{k=0}^K (\abs{j}+2k+1)\sum_{m=0}^\infty\frac{1}{(m+\abs{j}+2k+1)^2} \notag\\
		&< 2\sum_{k=0}^K (\abs{j}+2k+1)\frac{\abs{j}+2k+2}{(\abs{j}+2k+1)^2} \notag\\
		&\leq 2\sum_{k=0}^K\frac{2k+2}{2k+1} = 2\Bigl(K+2 + \sum_{k=1}^K\frac{1}{2k+1}\Bigr) \notag\\[1mm]
		&\leq 2K+H_K+4, \label{eq:sumupbnd}
	\end{align}
	where $H_K$ is the $K$'th harmonic number, i.e.\ the $K$'th partial sum of the harmonic series, with $H_0 = 0$. Combining \eqref{eq:Fetacomp}, \eqref{eq:Fcomp1}, and \eqref{eq:sumupbnd}, we finally have
	\begin{align*}
		\norm{F\eta}_{\textup{HS}}^2 &= \sum_{m(m+j)>0} \big| \inner{(F\eta)f_m,f_{m+j}} \big|^2 \\
		&\leq \frac{4}{\pi}(2K+H_K+4)\sum_{j\in\Z}\sum_{k=0}^{K} \abs{c_{j,k}}^2 \\
		&= \frac{4}{\pi}(2K+H_K+4)\norm{\eta}_{L^2(D)}^2
	\end{align*}
    which proves \eqref{eq:Fbnd}.
	
	\subsection{Lipschitz stability on $\mathcal{A}_K$}
	
	The next objective is to prove \eqref{eq:Fstability}. We start by focusing on $\psi_{j,k}$ with fixed $j\in\Z$ and $k\in\N_0$. According to \eqref{eq:Fetacomp}, the only nonzero matrix elements $\inner{(F\psi_{j,k})f_m,f_{n}}$ correspond to $n = m+j$ and $m\in\mathcal{I}_{j,k}$ with 
	\begin{equation*}
		\mathcal{I}_{j,k} = \{m \in \Z \mid m(m+j)>0, \ \abs{m}>k, \ \abs{m+j}>k\}.
	\end{equation*}
	Using $2v_{m,m+j} = \abs{m}+\abs{m+j}-2 -\abs{j}$ for $m\in\mathcal{I}_{j,k}$, we deduce
	\begin{equation} \label{eq:Fcomp2}
		\inner{(F\psi_{j,k})f_m,f_{n}} = \begin{dcases}
			-\frac{2}{\sqrt{\pi}}\frac{\sqrt{\abs{j}+2k+1}}{\abs{j}+\abs{m}+\abs{m+j}+2k}\, p_{m,j,k} & \text{for } n = m+j,\enskip m\in\mathcal{I}_{j,k}, \\[1mm]
			0 & \text{else}.
		\end{dcases}
	\end{equation}
	Moreover, for $m\in\mathcal{I}_{j,k}$ and $k\in\N$,
	\begin{equation*}
		p_{m,j,k} = \prod_{s=1}^k\frac{\abs{m}+\abs{m+j}-\abs{j}-2s}{\abs{j}+\abs{m}+\abs{m+j}+2k-2s};
	\end{equation*}
    remember that $p_{m,j,0}=1$ for any $m$ and $j$.

	Recall that $\sgn:\R\to\{-1,1\}$ is the sign function with $\sgn(0) = 1$. It is straightforward to check that the specific indices
	\begin{equation} \label{eq:mindex}
		m_{q,j,k} = \sgn(j)(\abs{j}+q+k+1)
	\end{equation}
	belong to $\mathcal{I}_{j,k}$ for any $q\in\N_0$. In particular,
    \begin{equation*}
	    \abs{m_{q,j,k}} = \abs{j}+q+k+1 \qquad {\rm and} \qquad \abs{m_{q,j,k} + j} = 2\abs{j}+q+k+1. 
    \end{equation*}
    Hence,
	\begin{align} \label{eq:pochmjk}
		p_{m_{q,j,k},j,k} &= \prod_{s=1}^k\frac{\abs{j}+q+k+1-s}{2\abs{j}+q+2k+1-s} \geq \prod_{s=1}^k\min\Bigl\{\frac{1}{2},\frac{q+k+1-s}{q+2k+1-s}\Bigr\} \notag\\
		&\geq \prod_{s=1}^k\min\Bigl\{\frac{1}{2},\frac{k+1-s}{2k+1-s}\Bigr\} = \binom{2k}{k}^{-1}.
	\end{align}
	The first inequality follows from each of the fractions being either increasing or decreasing in $\abs{j}$, with the choice between the two options only depending on the sign of $s-1-q$. The second inequality is due to each of the considered fractions being increasing in $q$. The last equality results from each of the fractions being decreasing in $s$ and equaling $\frac{1}{2}$ for $s = 1$. Note that this lower bound also holds for $k=0$, as it yields the optimal value $1$.
	
	Combining \eqref{eq:Fcomp2}, \eqref{eq:mindex}, and \eqref{eq:pochmjk}, we can determine a lower bound for $\norm{F\psi_{j,k}}_{\textup{HS}}$:
	\begin{align}
		\norm{F\psi_{j,k}}_{\textup{HS}}^2 &= \sum_{m\in\mathcal{I}_{j,k}} \big| \inner{(F\psi_{j,k})f_m,f_{m+j}} \big|^2 \geq \sum_{q = 0}^\infty \big| \inner{(F\psi_{j,k})f_{m_{q,j,k}},f_{m_{q,j,k}+j}} \big|^2 \notag\\
		&\geq \frac{1}{\pi}\binom{2k}{k}^{-2}(\abs{j}+2k+1)\sum_{q=0}^\infty\frac{1}{(2\abs{j}+2k+1+q)^2} \notag\\
		&> \frac{1}{\pi}\binom{2k}{k}^{-2}(\abs{j}+2k+1)\frac{4\abs{j}+4k+3}{2(2\abs{j}+2k+1)^2} \notag\\[1mm]
		&> \frac{1}{2\pi}\binom{2k}{k}^{-2}, \label{eq:Fpsibnd}
	\end{align}
	where the penultimate inequality corresponds to the lower bound in \eqref{eq:trigamma}.
	
	\begin{remark} \label{remark:k_is_0}
		In the case $k = 0$, we could have defined $m_{q,j,0} = \sgn(j)(q+1)$ without the $\abs{j}$-term. The additional $\abs{j}$-term in \eqref{eq:mindex} is only needed for establishing a $j$-independent lower bound on $p_{m_{q,j,k},j,k}$ in \eqref{eq:pochmjk}, which is not relevant for $k=0$ as $p_{m,j,0} = 1$ for any $m$ and $j$. Following the same line of reasoning as above, this observation leads to replacing $\frac{1}{2\pi}$ by  $\frac{1}{\pi}$ in \eqref{eq:Fpsibnd}, which gives a more accurate lower bound for the case $k = 0$.
	\end{remark}
	
	Assume now $\eta\in \mathcal{A}_K$. In particular, there is an $\ell^2$ sequence of coefficients $c_{j,k}$ such that
	\begin{equation*}
		\eta = \sum_{j\in\Z}\underbrace{\sum_{k=0}^Kc_{j,k}\psi_{j,k}}_{\varphi_j} = \sum_{j\in\Z}\varphi_j.
	\end{equation*}
	In \eqref{eq:Fcomp2} all the terms $\inner{(F\psi_{j,k})f_m,f_n}$ are real and nonpositive. Thus, based on the condition $\redel(c_{j,k}\overline{c_{j,k'}})\geq 0$ in \eqref{eq:Aspace}, we have the inequality
	\begin{equation*}
		\abs{\inner{(F\varphi_j)f_m,f_n}}^2 = \biggl\lvert \sum_{k=0}^K c_{j,k} \inner{(F\psi_{j,k})f_m,f_n} \biggr\rvert^2 \geq \sum_{k=0}^K \abs{c_{j,k}}^2 \big| \inner{(F\psi_{j,k})f_m,f_n} \big|^2.
	\end{equation*}
	Thereby \eqref{eq:Fpsibnd} also gives a lower bound for $\norm{F\varphi_{j}}_{\textup{HS}}$:
	\begin{equation} \label{eq:varphibnd}
		\norm{F\varphi_{j}}_{\textup{HS}}^2 \geq \frac{1}{2\pi}\sum_{k=0}^K \abs{c_{j,k}}^2 \binom{2k}{k}^{-2} \geq \frac{1}{2\pi}\binom{2K}{K}^{-2}\sum_{k=0}^K \abs{c_{j,k}}^2.
	\end{equation}
	
	According to \eqref{eq:Fcomp2}, the nonzero elements of the infinite matrix $\{\inner{(F \varphi_j)f_m,f_n}\}_{m,n\in\Z\setminus\{0\}}$ lie on its $j$'th diagonal. Hence, $\{F\varphi_j\}_{j\in\Z}$ is an orthogonal set in the Hilbert--Schmidt inner product. Due to this orthogonality and \eqref{eq:varphibnd}, we have
	\begin{equation*}
		\norm{F\eta}_{\textup{HS}}^2 = \Bigl\lVert\sum_{j\in\Z} F\varphi_j\Bigr\rVert_{\textup{HS}}^2 = \sum_{j\in \Z}\norm{F\varphi_j}_{\textup{HS}}^2 \geq \frac{1}{2\pi}\binom{2K}{K}^{-2}\sum_{j\in\Z}\sum_{k=0}^K \abs{c_{j,k}}^2 = \frac{1}{2\pi}\binom{2K}{K}^{-2}\norm{\eta}_{L^2(D)}^2.
	\end{equation*} 
	Together with Remark~\ref{remark:k_is_0}, this shows \eqref{eq:Fstability} and concludes the proof of Theorem~\ref{thm:main2}. \hfill\qed
	
	\section{Proof of Theorem~\ref{thm:main1}} \label{sec:reconstruction}
	
	Let $\eta\in L^2(D)$, i.e.\ there is an $\ell^2(\Z\times \N_0)$ sequence of coefficients $c_{j,k}$ such that
	\begin{equation*}
		\eta = \sum_{k=0}^\infty \underbrace{\sum_{j\in\Z} c_{j,k}\psi_{j,k}}_{\eta_k} = \sum_{k=0}^\infty \eta_k.
	\end{equation*}
	In other words, $\eta_k$ is the orthogonal projection of $\eta$ onto $\mathcal{H}_k$ defined via the coefficients $\{c_{j,k}\}_{j\in\Z}$. Obviously,
    \begin{equation} \label{eq:eta_minusk}
		\kappa_k = \eta - \sum_{q=0}^{k-1} \eta_q = \sum_{q=k}^\infty \sum_{j\in\Z} c_{j,q}\psi_{j,q}, \qquad k \in \N_0,
	\end{equation}
    does not include any $c_{j,q}$ coefficients for $q<k$.

    Recall the infinite matrix coefficients from \eqref{eq:Fetacomp} for $m(m+j)>0$,
	\begin{equation} \label{eq:Fetacomprecon}
		\inner{(F\eta)f_m,f_{m+j}}_{L^2(\partial D)} = -\frac{1}{\sqrt{\pi}}\sum_{q=0}^{v_{m,m+j}} c_{j,q}\frac{\sqrt{\abs{j}+2q+1}\,(v_{m,m+j})_q}{(\abs{j}+v_{m,m+j}+q+1)(\abs{j}+v_{m,m+j}+q)_{q}}.
	\end{equation}
	We still denote by $\sgn$ the sign function satisfying $\sgn(0) = 1$ and make use of the special indices
	\begin{equation*}
		m_{j,k} = \sgn(j)(k+1), \qquad j\in\Z,\enskip k\in\N_0,
	\end{equation*}
	which leads to
	\begin{equation*}
		v_{m_{j,k},m_{j,k}+j} = \min\{\abs{m_{j,k}},\abs{m_{j,k}+j}\}-1 = k
	\end{equation*}
	for all $j\in\Z$. Substituting $\kappa_k$ from \eqref{eq:eta_minusk} for $\eta$ and $m_{j,k}$ for $m$ in \eqref{eq:Fetacomprecon} removes all but the last term in the sum over $q$, with that last term corresponding to $q = k$. This gives
	\begin{equation*}
		\Bigl\langle F\Bigl(\eta-\sum_{q=0}^{k-1}\eta_q\Bigr)f_{m_{j,k}},f_{m_{j,k}+j} \Big\rangle_{L^2(\partial D)} = \, -\frac{1}{\sqrt{\pi}}\, c_{j,k}\frac{(k)_k}{\sqrt{\abs{j}+2k+1}\,(\abs{j}+2k)_{k}},
	\end{equation*} 
	or equivalently,
	\begin{equation} \label{eq:cjkrecon}
		c_{j,k} = -\sqrt{\pi(\abs{j}+2k+1)}\,\frac{(\abs{j}+2k)_{k}}{(k)_k}\, \Bigl\langle F\Bigl(\eta-\sum_{q=0}^{k-1}\eta_q\Bigr)f_{m_{j,k}},f_{m_{j,k}+j}\Bigr\rangle_{L^2(\partial D)}.
	\end{equation} 
	Employing \eqref{eq:Fetacomprecon}, the term involving $F(\sum_{q=0}^{k-1}\eta_q)$ can be written as
	\begin{equation*}
		\Bigl\langle F\Bigl(\sum_{q=0}^{k-1}\eta_q\Bigr)f_{m_{j,k}},f_{m_{j,k}+j}\Bigr\rangle_{L^2(\partial D)} = -\frac{1}{\sqrt{\pi}}\sum_{q=0}^{k-1}c_{j,q} \frac{\sqrt{\abs{j}+2q+1}\,(k)_q}{(\abs{j}+k+q+1)(\abs{j}+k+q)_q}.
	\end{equation*}
	Combined with \eqref{eq:cjkrecon}, and making use of the simplifications
	\begin{equation*}
		\frac{(\abs{j}+2k)_k}{(k)_k} = \binom{\abs{j}+2k}{k}\qquad \text{and} \qquad \frac{(k)_q(\abs{j}+2k)_k}{(k)_k(\abs{j}+k+q)_q} = \binom{\abs{j}+2k}{k-q},
	\end{equation*}
	this concludes the proof of Theorem~\ref{thm:main1}. \hfill \qed
     
    \section{Injectivity for square-integrable perturbations} \label{sec:injectivity}
    
    In this section we present an alternative proof for Calder\'on's injectivity result, relying on polynomial approximation instead of the Fourier transform and harmonic exponentials. The result is also implied by Theorem~\ref{thm:main1}, but our aim is to give a less technical proof. To this end, we need an approximation result for polynomials that miss an arbitrary number of lowest order terms.
    
    \begin{lemma} \label{lemma:polapprox}
    	For $n\in \mathbb{N}_0$ let 
    	\begin{equation*}
    		\mathcal{P}_n = \mspan\{x^{n+2m} \mid m \in\N_0\}.
    	\end{equation*} 
    	The space $\mathcal{P}_n$ is dense in $L^2((0,1))$ for any $n\in \mathbb{N}_0$.	
    \end{lemma}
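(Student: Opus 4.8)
The plan is to argue by duality: since $\mathcal{P}_n$ is a subspace, it is dense in $L^2((0,1))$ precisely when its orthogonal complement is trivial. So I would fix $g\in L^2((0,1))$ satisfying $\inner{g, x^{n+2m}}_{L^2((0,1))} = \int_0^1 g(x)\,x^{n+2m}\,\di x = 0$ for every $m\in\N_0$ (the complex conjugate is irrelevant since the monomials are real-valued) and show that $g=0$ almost everywhere.

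The decisive step is the substitution $u = x^2$, which collapses the gappy exponent set $\{n+2m\}_{m\in\N_0}$ onto the full set of nonnegative integer powers of $u$. With $x=\sqrt{u}$ and $\di x = \tfrac12 u^{-1/2}\,\di u$, each orthogonality relation becomes $\int_0^1 h(u)\,u^m\,\di u = 0$ for all $m\in\N_0$, where $h(u) = \tfrac12\, g(\sqrt{u})\,u^{(n-1)/2}$. For this reformulation to be meaningful I first confirm $h\in L^1((0,1))$: undoing the substitution gives $\int_0^1\abs{h(u)}\,\di u = \int_0^1 \abs{g(x)}\,x^n\,\di x$, which is finite by the Cauchy--Schwarz inequality, being bounded by $\norm{g}_{L^2((0,1))}\norm{x^n}_{L^2((0,1))}$.

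Once $h\in L^1((0,1))$ has all vanishing moments, I finish with the Weierstrass approximation theorem. The moment conditions yield $\int_0^1 h(u)\,p(u)\,\di u = 0$ for every polynomial $p$; given any $\phi\in C([0,1])$, choosing a polynomial $p$ with $\norm{\phi-p}_{\infty}$ small and estimating $\bigl\lvert\int_0^1 h\phi\,\di u\bigr\rvert \le \norm{h}_{L^1((0,1))}\norm{\phi-p}_\infty$ shows $\int_0^1 h\phi\,\di u = 0$ for all continuous $\phi$, whence $h=0$ almost everywhere. Since $u^{(n-1)/2}>0$ on $(0,1)$, this forces $g(\sqrt u)=0$ a.e., i.e.\ $g=0$ a.e., establishing the density of $\mathcal{P}_n$.

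The statement is of course an instance of the M\"untz--Sz\'asz theorem (the exponents $n+2m$ satisfy $\sum_m (n+2m)^{-1}=\infty$), but the substitution $u=x^2$ renders that heavy machinery unnecessary here. The only genuinely delicate point is the integrability of $h$: when $n=0$ the weight $u^{-1/2}$ is singular at the origin, so it is essential that pairing this singular weight against $g\in L^2((0,1))$ still lands in $L^1((0,1))$, which is exactly what the Cauchy--Schwarz bound above guarantees; everything else is routine.
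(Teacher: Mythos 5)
Your proof is correct, but it takes a genuinely different route from the paper's. The paper absorbs the gap in the exponent set $\{n+2m\}$ by an even reflection: it sets $g(x)=x^nf(x)$ on $(0,1)$, extends $g$ evenly to $(-1,1)$, observes that the even moments of $g$ are exactly the given orthogonality relations while the odd moments vanish by symmetry, and then invokes density of polynomials in $L^2((-1,1))$ to get $g=0$, hence $f=0$. That argument never leaves $L^2$, so the final step is plain orthogonality to a dense set. Your substitution $u=x^2$ instead collapses $\{x^{n+2m}\}$ onto the full monomial set $\{u^m\}$ at the cost of a weight $u^{(n-1)/2}$, which pushes the auxiliary function $h$ down to $L^1((0,1))$ only; you therefore need the $L^1$ moment problem on a compact interval (vanishing against all polynomials, hence against all continuous functions by Weierstrass, hence $h=0$ a.e.), which is a slightly heavier determinacy statement than $L^2$ orthogonality but entirely standard. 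You correctly identify and dispose of the one delicate point, namely the integrability of $h$ when $n=0$ via Cauchy--Schwarz, and the final passage from $h=0$ to $g=0$ is sound since $u\mapsto\sqrt{u}$ preserves null sets. In short: the paper trades the exponent gap for a parity argument on $(-1,1)$ and stays in $L^2$; you trade it for a change of variables and work in $L^1$. Both are elementary and complete; the reflection is marginally slicker, while your version makes the connection to the classical (full) moment problem and to M\"untz--Sz\'asz more transparent.
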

    \begin{proof}
    	Let $f\in\overline{\mathcal{P}_n}^\perp$, which means that
    	\begin{equation} \label{eq:polint}
    		\int_0^1 x^{n+2m}f(x)\,\di x = 0, \qquad m = 0,1,\dots \ .
    	\end{equation}
    	%
        Using the even reflection 
    	\begin{equation*}
    		g(x) = \begin{cases}
    			x^nf(x) & \text{for } x\in(0,1), \\[1mm]
    			(-x)^nf(-x) & \text{for } x\in (-1,0),
    		\end{cases}
    	\end{equation*}
    	and \eqref{eq:polint}, it follows that
    	\begin{equation*} 
    		\int_{-1}^1 x^k g(x)\,\di x = 0, \qquad k=0,1,\dots \ .
    	\end{equation*}
    	Thus $g = 0$ in $L^2((-1,1))$, and thereby $f = 0$ in $L^2((0,1))$. 
    \end{proof}
    
    To conclude this work, we prove the injectivity of $F$ on $L^2(\Omega)$ perturbations.
    
    \begin{theorem} \label{thm:injective}
    	$F\in\mathscr{L}(L^2(\Omega),\mathscr{L}(L_\diamond^2(\partial\Omega)))$ is injective.
    \end{theorem}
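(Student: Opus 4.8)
The plan is to reduce the claim to the unit disk, where I can use the explicit action of $F$ on the Fourier basis together with the radial approximation result of Lemma~\ref{lemma:polapprox}.

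First I would invoke the conformal reduction of section~\ref{sec:conformal}: the map $\eta \mapsto \widetilde{\eta} = \eta\circ\Phi$ is a linear isomorphism of $L^2(\Omega)$ onto $L^2(D)$ by \eqref{eq:eta_trans}, and the identity $\inner{(F\widetilde{\eta})f_m,f_n}_{L^2(\partial D)} = \inner{(F\eta)\widehat{f}_m,\widehat{f}_n}_{L^2(\partial\Omega)}$ shows that $F\eta=0$ forces every matrix element of $F\widetilde{\eta}$ in the orthonormal basis $\{f_m\}_{m\in\Z\setminus\{0\}}$ of $L^2_\diamond(\partial D)$ to vanish, i.e.\ $F\widetilde{\eta}=0$. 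Hence it suffices to show that $\eta\in L^2(D)$ with $F\eta=0$ must vanish. Assuming this, I would write the angular Fourier expansion $\eta(r\e^{\I\theta}) = \sum_{j\in\Z} a_j(r)\e^{\I j\theta}$, where Parseval in $\theta$ gives $\sum_{j\in\Z} \norm{a_j}_{L^2_r((0,1))}^2 = \tfrac{1}{2\pi}\norm{\eta}_{L^2(D)}^2 < \infty$, so that each $a_j \in L^2_r((0,1))$. Using the explicit harmonic functions $u_m$ and \eqref{eq:Fip}, for every $m,n$ with $mn>0$ the definition \eqref{eq:F} reads $\inner{(F\eta)f_m,f_n} = -\tfrac{1}{\pi}\int_D \eta\, r^{\abs{m}+\abs{n}-2}\e^{\I(m-n)\theta}\,\di x$. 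Carrying out the angular integration (justified since $\eta = \sum_j a_j\e^{\I j\theta}$ converges in $L^2(D)$ and the remaining factor is bounded) isolates the component with angular index $n-m$, so the vanishing of this matrix element becomes $\int_0^1 a_{n-m}(r)\, r^{\abs{m}+\abs{n}-1}\,\di r = 0$.

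Next, fixing $j$ and letting $(m,n)=(m,m+j)$ run over all pairs with $m(m+j)>0$, I would check that the exponents $\abs{m}+\abs{n}-2$ sweep out exactly $\{\abs{j}+2p \mid p\in\N_0\}$: for $j>0$ the indices $m\geq 1$ give $2m+j-2\in\{j,j+2,\dots\}$ while the indices $m\leq -j-1$ reproduce the same set, with the analogous bookkeeping for $j<0$ and for $j=0$. Consequently the orthogonality relations amount to $\int_0^1 a_j(r)\, r^{\abs{j}+1+2p}\,\di r = 0$ for every $p\in\N_0$. Setting $h_j(r)=r\,a_j(r)$, I note $h_j\in L^2((0,1))$ because $\int_0^1 \abs{h_j}^2\,\di r = \int_0^1 \abs{a_j}^2 r^2\,\di r \leq \int_0^1 \abs{a_j}^2 r\,\di r < \infty$, and the relations say precisely that $h_j$ is orthogonal in $L^2((0,1))$ to $\mathcal{P}_{\abs{j}} = \mspan\{r^{\abs{j}+2p} \mid p\in\N_0\}$. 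Since $\mathcal{P}_{\abs{j}}$ is dense in $L^2((0,1))$ by Lemma~\ref{lemma:polapprox}, we obtain $h_j=0$, hence $a_j=0$; as this holds for every $j\in\Z$, we conclude $\eta=0$.

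The main obstacle I anticipate is the mismatch of function spaces: the radial components $a_j$ only belong to the weighted space $L^2_r((0,1))$, so the density of $\mathcal{P}_{\abs{j}}$ in the unweighted $L^2((0,1))$ cannot be applied to $a_j$ directly. The device of passing to $h_j = r\,a_j$ resolves this on both counts, since it simultaneously absorbs the weight, converting the weighted moment conditions into genuine $L^2((0,1))$ orthogonality against $\mathcal{P}_{\abs{j}}$, and lands in $L^2((0,1))$ precisely because $r\leq 1$ on $(0,1)$. The only remaining point needing care is the elementary but slightly fiddly verification that, as $m$ ranges over the admissible indices for a fixed diagonal $j$, the radial exponents cover the full progression $\abs{j}, \abs{j}+2, \abs{j}+4,\dots$ with no gaps, which is exactly what matches them to the generators of $\mathcal{P}_{\abs{j}}$.
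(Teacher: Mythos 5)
Your proof is correct and follows essentially the same route as the paper's: conformal reduction to the disk, angular Fourier decomposition of $\eta$, the explicit matrix elements obtained from \eqref{eq:Fip}, and Lemma~\ref{lemma:polapprox} applied to $r\,a_j$ to absorb the weight. The only difference is organisational---you argue directly that all weighted moments of every radial component vanish, whereas the paper argues contrapositively by exhibiting a single nonvanishing matrix element via a well-chosen index pair---but the key lemma and the index bookkeeping are the same.
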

    \begin{proof}
    	Recall that Proposition~\ref{prop:L2} yields $F\in\mathscr{L}(L^2(\Omega),\mathscr{L}(L_\diamond^2(\partial\Omega)))$. Moreover, it is sufficient to prove the injectivity for the unit disk $D$, as the corresponding result for a general simply connected $C^{1,\alpha}$ domain $\Omega$ subsequently follows from \eqref{eq:transformed}.
    	
    	Working with the polar coordinates $x = r\e^{\I\theta}$, let $\{g_m\}_{m=1}^\infty$ be an orthonormal basis for $L^2_r((0,1))$ and let $f_j$ be as defined in \eqref{eq:fj}. The functions $g_m(r)f_j(e^{\I\theta})$ for $m\in\mathbb{N}$ and $j\in\mathbb{Z}$ form an orthonormal basis for $L^2(D)$. Thus for any $\eta\in L^2(D)$ there is an $\ell^2(\Z\times\N)$ sequence of coefficients $c_{j,m}$ such that
    	\begin{equation*}
    		\eta(r\e^{\I \theta}) = \sum_{j\in\Z}\underbrace{\sum_{m\in\N}c_{j,m}g_m(r)}_{\rho_j(r)} \! f_j(e^{\I\theta}) = \sum_{j\in\Z}\rho_j(r)f_j(e^{\I\theta}).
    	\end{equation*}
    	Notice that $\rho_j\in L_r^2((0,1))$ and
    	\begin{equation} \label{eq:normeta}
    		\norm{\eta}^2_{L^2(D)} = \sum_{j\in\Z}\norm{\rho_j}^2_{L^2_r((0,1))}.
    	\end{equation}
    	We assume that $\eta \neq 0$ and proceed to prove that $F$ is injective by showing that also $F\eta \neq 0$. 
    	
    	Due to \eqref{eq:normeta}, at least one of the radial components is nonzero, say, $\rho_k$ for some $k\in\mathbb{Z}$. Since $r\rho_k$ belongs to $L^2((0,1))$, Lemma~\ref{lemma:polapprox} guarantees the existence of an arbitrarily large $n_0\in\mathbb{N}_0$ such that
    	\begin{equation*}
    		\int_0^1 \rho_k(r)\, r^{n_0+1}\,\di r \neq 0.
    	\end{equation*}
    	Moreover, we have the freedom to choose if $n_0$ is even or odd.
    	
    	Let $mn>0$. Inserting \eqref{eq:Fip} and the series representation of $\eta$ into \eqref{eq:F} gives
    	\begin{align*}
    		\inner{(F\eta)f_m,f_n}_{L^2(\partial D)} &= \frac{-1}{\pi\sqrt{2\pi}}\int_0^1\int_0^{2\pi} \sum_{j\in\mathbb{Z}}\rho_j(r)\,r^{\abs{m}+\abs{n}-1}\e^{\I(j+m-n)\theta}\,\di \theta\,\di r \\
    		&= -\sqrt{\frac{2}{\pi}}\int_0^1 \rho_{n-m}(r)\,r^{\abs{m}+\abs{n}-1}\,\di r.
    	\end{align*}
    	Let us focus on some specific indices, namely 
    	\begin{equation*}
    		n_k = \frac{1}{2}(k+n_0+2) \qquad \text{and} \qquad m_k = \frac{1}{2}(-k+n_0+2).
    	\end{equation*}
    	We can pick $n_0$ large enough so that $n_k=\abs{n_k}$ and $m_k=\abs{m_k}$. Moreover, we pick $n_0$ to be even if $k$ is even and odd if $k$ is odd, ensuring that $n_k$ and $m_k$ are integers. Then we have
    	\begin{equation*}
    		\inner{(F\eta)f_{m_k},f_{n_k}}_{L^2(\partial D)} = -\sqrt{\frac{2}{\pi}}\int_0^1 \rho_{k}(r)\, r^{n_0+1}\,\di r \neq 0,
    	\end{equation*}
    	i.e.~$F\eta \neq 0$ and $F$ is injective. 
    \end{proof} 
    
    \subsection*{Acknowledgments}
    
    The authors thank Arne Jensen (Aalborg University) for useful discussions. This work is supported by the Academy of Finland (decision 336789) and the Aalto Science Institute (AScI). HG is supported by grant 10.46540/3120-00003B from Independent Research Fund Denmark \textbar\ Natural Sciences and by The Research Foundation of DPhil Ragna Rask-Nielsen. NH is supported by Jane and Aatos Erkko Foundation via the project Electrical impedance tomography --- a novel method for improved diagnostics of stroke. 
    
	\bibliographystyle{plain}

\end{document}